\documentclass[english,12pt]{smfart}
  \usepackage[utf8]{inputenc}
  \usepackage{smfthm,smfenum}
  \usepackage{amsmath,amsxtra,amscd,amssymb,latexsym,stmaryrd}
  \usepackage{mathrsfs,dsfont}
  \usepackage{graphicx}
  \usepackage{color}
  \usepackage{pinlabel}

\title[Ultrametrics]{A geometric study of Wasserstein spaces: ultrametrics}

\author{Beno\^{\i}t R. Kloeckner}
\address{Universit\'e de Grenoble I, Institut Fourier\\ CNRS UMR 5582\\ BP 74\\
  38402 Saint Martin d'H\`eres cedex\\ France}
\email{benoit.kloeckner@ujf-grenoble.fr}

\theoremstyle{plain}

\newcommand{\wass}{\mathop{\mathscr{W}}\nolimits}
\newcommand{\dw}{\mathop{\mathrm{W}}\nolimits}
\newcommand{\supp}{\mathop{\mathrm{supp}}\nolimits}
\newcommand{\pr}{\mathop\mathscr{P}\nolimits}
\newcommand{\diam}{\mathop\mathrm{diam}\nolimits}
\newcommand{\udim}{\mathop {\operatorname{\overline{M}-dim}}\nolimits}
\newcommand{\crit}{\mathop \mathrm{crit}\nolimits}
\newcommand{\dd}{\mathrm{d}}
\newcommand{\bcube}{\mathop \mathrm{BC}\nolimits}

\begin{document}
%%%%%%%%%%%%%%%%%%%%%%%%%%%%%%%%%%%%%%%%%%%%%%%%%%%%%%%%%%%%%%%
%%%%%%%%%%%%%%%%%%%%%%%%%%%%%%%%%%%%%%%%%%%%%%%%%%%%%%%%%%%%%%%
\begin{abstract}
We study the geometry of the space of measures of a compact ultrametric space $X$,
endowed with the $L^p$ Wasserstein distance from optimal transportation. We show
that the power $p$ of this distance makes this Wasserstein space affinely isometric
to a convex subset of $\ell^1$. As a consequence, it is connected by $\frac1p$-Hölder
arcs, but any $\alpha$-Hölder arc with $\alpha>\frac1p$ must be constant.

This result is obtained \emph{via} a reformulation of the distance between
two measures which is very specific to the case when $X$ is ultrametric; however
thanks to the Mendel-Naor Ultrametric Skeleton it has consequences
even when $X$ is a general 
compact metric space. More precisely, we use it to estimate the size of Wasserstein
spaces, measured by an analogue of Hausdorff dimension that is adapted to
(some) infinite-dimensional spaces. The result we get generalizes greatly
our previous estimate that needed a strong rectifiability assumption.

The proof of this estimate involves a structural
theorem of independent interest: every ultrametric space contains large 
co-Lipschitz images of \emph{regular} ultrametric spaces, i.e. spaces of
the form $\{1,\dots,k\}^\mathbb{N}$ with a natural ultrametric.

We are also lead to an example of independent interest: a 
space of positive lower Minkowski dimension, all of whose proper
closed subsets have vanishing lower Minkowski dimension.
\end{abstract}

\thanks{This work was supported by the Agence Nationale de la Recherche, grant
``GMT'' ANR-11-JS01-0011.}

\maketitle

%%%%%%%%%%%%%%%%%%%%%%%%%%%%%%%%%%%%%%%%%%%%%%%%%%%%%%%%%%%%%%%%%%%%
\section{Introduction}

Given a metric space $X$, that we shall always assume to be compact,
one can define its $L^p$ Wasserstein space $\wass_p(X)=(\pr(X),\dw_p)$
as the set of 
its Borel probability measures $\pr(X)$ endowed
with a distance $\dw_p$ defined using optimal transportation (see below for precise definitions).
In some sense, $\wass_p(X)$ can be thought of as a geometric measure theory analogue of $L^p$
space, although its geometry is finely governed by the geometry of $X$ as $\dw_p$ involves
the metric on $X$ in a crucial way; in particular, the great variety of metric spaces
induces a great variety of Wasserstein spaces.
As a consequence, the natural affine structure of $\pr(X)$ (i.e., its affine structure
as a convex in the dual space of continuous functions) is in general only
loosely related to the geometric structure of $\dw_p$.

The links between optimal transportation and geometry have been the object
of a lot of studies in the past decade.
In a series of papers we try to understand what kind of geometric information
on $\wass_p(X)$ can be obtained from given geometric information on $X$. We considered
for example isometry groups and embeddability questions when $X$ is a Euclidean space
\cite{K:WEuclidean} or, with J\'er\^ome Bertrand, a Hadamard space \cite{BK:WHadamard},
and the size of $\wass_p(X)$ when $X$ is (close to
be) a compact manifold \cite{K:Hausdorff}.

Here we consider the case when $X$ is a compact ultrametric space, i.e. satisfies the
following strengthening of the triangular inequality:
\[d(x,z)\leqslant \max(d(x,y),d(y,z)).\]
Examples of compact ultrametric spaces include notably the set of $p$-adic integers $\mathbb{Z}_p$
or more generally
the set $\{1,\dots,k\}^{\mathbb{N}}$ of infinite words on an alphabet with $k$ letters,
endowed with
the distance $d(\bar x, \bar y)=q^{-\min\{i,x_i\neq y_i\}}$ where $q>1$
and $\bar x=(x_1,x_2,\dots)$, $\bar y=(y_1,y_2,\dots)$. We shall call these
examples \emph{regular} ultrametric spaces and denote them by $Y(k,q)$.

%%%%%%%%%%%
\subsection{Embedding in snowflaked $\ell^1$}

Ultrametric spaces are in some sense the simplest spaces in which to do optimal transportation,
thanks to the very strong structure given by the ultrametric inequality. We are
therefore able to give a very concrete description of $\wass_p(X)$.
\begin{theo}\label{theo:coordinates}
If $X$ is a compact ultrametric space, then
$(\pr(X),\dw_p^p)$ is affinely isometric to a convex subset of
$\ell^1$.
\end{theo}
Another way to state this result is to say that $\wass_p(X)$ is
affinely isometric to a convex subset of $\ell^1$ endowed with the
``snowflaked'' metric $\Vert \cdot\Vert_{1}^{1/p}$. 
Note that the existence of such an embedding, both affine and geometrically
meaningful, of $\wass_p(X)$ into a Banach space seems to be quite exceptional; 
the closest case I know of is $\wass_p(\mathbb{R})$, which is isometric
to a subset of increasing functions in $L^p([0,1])$, but even there the isometry is
not affine.
In fact, the absence of correlation between the affine structure of $\pr(X)$ and the geometry
of $\dw_p$ is an important reason for the relevance
of Wasserstein spaces, as the very fact that geodesics in the space of measures (when they exist)
are usually not affine lines
made it possible to define a new convexity
assumption that turned out to be very successful, see notably \cite{McCann}. In this
sense, Theorem \ref{theo:coordinates} is a negative result: when $X$ is
ultrametric there is little more to $\wass_p(X)$ than to $\pr(X)$, as far as 
we are concerned with notions which are affine in nature (e.g. convexity).
We shall see, however, that this result has nice consequences.

As is well-known,
snowflaked metrics are geometrically very disconnected (all rectifiable curves are constant)
and this affects the geometric connectivity of Wasserstein space.
\begin{coro}\label{coro:coordinates}
If $X$ is a compact ultrametric space, $\wass_1(X)$ is a geodesic space,
and for $p>1$, $\wass_p(X)$
is connected by $\frac1p$-Hölder arcs but any $\alpha$-Hölder arc
with $\alpha>\frac1p$ must be constant.
\end{coro}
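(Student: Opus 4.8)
The plan is to push everything through Theorem~\ref{theo:coordinates}: I would fix an affine isometry $\Phi$ from $(\pr(X),\dw_p^p)$ onto a convex subset $C\subset\ell^1$, so that $\dw_p(\mu,\nu)=\|\Phi(\mu)-\Phi(\nu)\|_1^{1/p}$ for all $\mu,\nu$ and $\Phi$ carries affine segments of measures to affine segments in $C$. Both halves of the corollary then become elementary statements about convex subsets of $\ell^1$ and their snowflakes.

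First, for $p=1$: here $\Phi$ is an isometry of $\wass_1(X)$ onto $(C,\|\cdot\|_1)$. Given $\mu_0\neq\mu_1$ in $\pr(X)$, the affine segment $\gamma(t)=(1-t)\mu_0+t\mu_1$ has image $\Phi\gamma(t)=\Phi(\mu_0)+t\bigl(\Phi(\mu_1)-\Phi(\mu_0)\bigr)$, so $\dw_1(\gamma(s),\gamma(t))=|t-s|\,\dw_1(\mu_0,\mu_1)$; reparametrized by arc length this is a geodesic from $\mu_0$ to $\mu_1$. Hence $\wass_1(X)$ is a geodesic space (and it is complete, $\pr(X)$ being compact).

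Next, for $p>1$, the positive part: the same segment now gives $\dw_p(\gamma(s),\gamma(t))^p=|t-s|\,\dw_p(\mu_0,\mu_1)^p$, i.e. $\dw_p(\gamma(s),\gamma(t))=\dw_p(\mu_0,\mu_1)\,|t-s|^{1/p}$, a $\frac1p$-Hölder arc joining $\mu_0$ to $\mu_1$, so $\wass_p(X)$ is connected by $\frac1p$-Hölder arcs. For the rigidity, I would take any $\alpha$-Hölder map $\gamma\colon[0,1]\to\wass_p(X)$ with $\alpha>\frac1p$, say $\dw_p(\gamma(s),\gamma(t))\leqslant L|t-s|^\alpha$; applying $\Phi$ this reads $\|\Phi\gamma(s)-\Phi\gamma(t)\|_1\leqslant L^p|t-s|^{\alpha p}$, with exponent $\alpha p>1$. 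Subdividing $[s,t]$ into $n$ equal pieces and using the triangle inequality in $\ell^1$ yields $\|\Phi\gamma(s)-\Phi\gamma(t)\|_1\leqslant L^p(t-s)^{\alpha p}\,n^{1-\alpha p}$, which tends to $0$ as $n\to\infty$; hence $\gamma$ is constant.

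I do not expect a genuine obstacle once Theorem~\ref{theo:coordinates} is in hand; the single point deserving attention is that the theorem supplies an \emph{affine} isometry, which is precisely what lets me identify the geometrically meaningful curves — geodesics when $p=1$, optimal $\frac1p$-Hölder arcs when $p>1$ — with plain affine segments, and thereby read both the connectivity statement and the Hölder rigidity directly off the behaviour of the snowflaked norm $\|\cdot\|_1^{1/p}$ along a segment.
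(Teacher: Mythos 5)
Your proposal is correct and follows essentially the same route as the paper: both rest on Theorem~\ref{theo:coordinates}, take affine segments (geodesics of the convex image in $\ell^1$) to get geodesics for $p=1$ and $\frac1p$-Hölder connecting arcs for $p>1$, and use the fact that a snowflaked metric $d^{\beta}$, $\beta<1$, admits no non-constant Lipschitz (equivalently, no $\alpha$-Hölder with $\alpha p>1$) curves. The only cosmetic difference is that you prove this last rigidity fact directly by the subdivision/triangle-inequality argument, where the paper cites it as folklore.
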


%%%%%%%%
\subsection{Size estimates}

While their strong structural properties make ultrametric spaces feel quite
easy to deal with, they also happen to be ubiquitous, as shown by the
ultrametric skeleton Theorem of Mendel and Naor \cite{Mendel-Naor:skeleton,Mendel-Naor:ultrametric}:
very roughly,
any metric space contains large almost ultrametric parts. This powerful result
enables us to control very precisely the size of very general Wasserstein spaces.
\begin{theo}\label{theo:size}
Given any compact metric space $X$ (not necessarily ultrametric), we have
\[\crit_\mathscr{P} \wass_p(X) \geqslant \dim X.\]
\end{theo}
Here, $\dim$ denotes the Hausdorff dimension, and $\crit_\mathscr{P}$
is the power-exponential critical parameter introduced in \cite{K:Hausdorff},
which is an extension of Hausdorff dimension that distinguishes
some infinite-dimensional spaces. This bi-Lipschitz invariant is constructed simply
by replacing the terms $\varepsilon^s$ by $\exp(-\varepsilon^{-s})$ in the definition
of Hausdorff dimension. In particular,
the above result implies that to cover $\wass_p(X)$ one needs
at least \emph{very roughly} $\exp(\varepsilon^{-\dim X})$ balls of radius $\varepsilon$.

\begin{rema}
It is in fact possible to give an elementary proof of Theorem \ref{theo:size}
that does not use ultrametric spaces. We shall give such a proof
in Section \ref{sec:alt}, but we feel 
the ultrametric proof has its own worth as it shows a way to use
the ultrametric skeleton theorem and could apply more generally
(notably to estimate the size of other large spaces, as spaces of closed
subsets, spaces of Hölder functions, etc.).

In fact, the non-ultrametric proof was only found some time after submission
of the first version of this article, and the ultrametric skeleton theorem
played a key role in the author's mind when thinking about the whole issue.
\end{rema}

As we proved in \cite{K:Hausdorff} that $\crit_\mathscr{P} \wass_p(X)$ is at 
most the upper
Min\-kow\-ski dimension $\udim X$ of $X$, the following results follow at once
from Theorem \ref{theo:size}.

\begin{coro}
Let $X$ be any compact metric space (not necessarily ultrametric);
if $\dim X= \udim X=d$, then $\crit_\mathscr{P} \wass_p(X)=d$.
\end{coro}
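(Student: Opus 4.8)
The plan is simply to sandwich the quantity $\crit_\mathscr{P} \wass_p(X)$ between two bounds that happen to coincide under the stated hypothesis. On the one hand, Theorem \ref{theo:size} gives the lower bound $\crit_\mathscr{P} \wass_p(X) \geqslant \dim X$, valid for \emph{any} compact metric space $X$. On the other hand, the companion inequality proved in \cite{K:Hausdorff}, namely $\crit_\mathscr{P} \wass_p(X) \leqslant \udim X$, provides the matching upper bound. Feeding in the assumption $\dim X = \udim X = d$, both inequalities read $d \leqslant \crit_\mathscr{P} \wass_p(X) \leqslant d$, which forces equality.

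Concretely, I would first recall the two relevant statements so the argument is self-contained: the inequality $\crit_\mathscr{P} \wass_p(X) \leqslant \udim X$ from the previous paper, and Theorem \ref{theo:size} from the present one. Then I would observe that there is nothing further to do: the hypothesis collapses the interval $[\dim X, \udim X]$ (which always contains $\crit_\mathscr{P} \wass_p(X)$, since $\dim X \leqslant \udim X$ holds for every compact metric space) to the single point $d$.

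There is no real obstacle here; the corollary is a formal consequence of results already in hand, and all the genuine work lies in the proof of Theorem \ref{theo:size} (via the Mendel–Naor ultrametric skeleton and the regular-subspace structure theorem) and, on the upper-bound side, in the covering estimates of \cite{K:Hausdorff}. The only point worth a word of care is making sure the definition of $\crit_\mathscr{P}$ used here agrees with the one in \cite{K:Hausdorff} so that the cited upper bound applies verbatim; since $\crit_\mathscr{P}$ is the power-exponential critical parameter recalled above (replacing $\varepsilon^s$ by $\exp(-\varepsilon^{-s})$ in the Hausdorff construction), this is immediate.
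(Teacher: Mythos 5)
Your proposal is correct and matches the paper's own argument exactly: the corollary is obtained by combining the lower bound $\crit_\mathscr{P}\wass_p(X)\geqslant\dim X$ from Theorem \ref{theo:size} with the upper bound $\crit_\mathscr{P}\wass_p(X)\leqslant\udim X$ from \cite{K:Hausdorff}, which coincide under the hypothesis $\dim X=\udim X=d$.
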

This greatly generalizes two of the main results of \cite{K:Hausdorff}, focused on $p=2$
and where either
instead of $\dim X=d$ we had to assume  the much stronger assumption that $X$ contains a bi-Lipschitz
image of $[0,1]^d$, or we could only conclude a much weaker lower bound on the size of $\wass_2(X)$
(even weaker than $\crit_\mathscr{P}\wass_2(X)>0$).

\begin{coro}
Let $X,X'$ be two compact metric spaces. If $\dim X > \udim X'$, then there is
no bi-Lipschitz embedding $\wass_p(X)\to \wass_{p'} (X')$, for any $p,p'\in [1,\infty)$.
\end{coro}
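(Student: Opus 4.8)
\emph{Proof proposal.} The plan is to argue by contradiction, leveraging that $\crit_\mathscr{P}$ is, like Hausdorff dimension, both a bi-Lipschitz invariant and monotone under inclusion. Suppose there were a bi-Lipschitz embedding $f\colon \wass_p(X)\to\wass_{p'}(X')$. Then $f(\wass_p(X))$ is a subset of $\wass_{p'}(X')$ that is bi-Lipschitz equivalent to $\wass_p(X)$, so
\[\crit_\mathscr{P}\wass_p(X)=\crit_\mathscr{P} f(\wass_p(X))\leqslant\crit_\mathscr{P}\wass_{p'}(X').\]

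It then remains to sandwich the two sides by the available dimension bounds. On the left, Theorem \ref{theo:size} gives $\dim X\leqslant\crit_\mathscr{P}\wass_p(X)$. On the right, the upper estimate from \cite{K:Hausdorff} recalled above gives $\crit_\mathscr{P}\wass_{p'}(X')\leqslant\udim X'$. Chaining these three inequalities yields $\dim X\leqslant\udim X'$, contradicting the hypothesis $\dim X>\udim X'$; hence no such $f$ can exist, for any choice of $p,p'\in[1,\infty)$.

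The one point needing a short verification — and the only real (if minor) obstacle here — is that $\crit_\mathscr{P}$ does decrease when passing to a subset and is preserved by bi-Lipschitz homeomorphisms. This should follow verbatim from the covering-number definition of $\crit_\mathscr{P}$ given in \cite{K:Hausdorff}, exactly as for Hausdorff dimension: substituting $\exp(-\varepsilon^{-s})$ for $\varepsilon^s$ leaves untouched the monotonicity of the relevant sums in the number of balls and in $s$, and a bi-Lipschitz map distorts radii by bounded factors. One should also confirm, as the text just above does, that the bound $\crit_\mathscr{P}\wass_{p'}(X')\leqslant\udim X'$ holds for every exponent $p'\in[1,\infty)$ and not merely for $p'=2$.
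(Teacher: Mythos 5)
Your argument is correct and is essentially the paper's own proof: both chain $\dim X \leqslant \crit_\mathscr{P}\wass_p(X)$ (Theorem \ref{theo:size}) with the upper bound $\crit_\mathscr{P}\wass_{p'}(X')\leqslant \udim X'$ from \cite{K:Hausdorff} and the monotonicity of $\crit_\mathscr{P}$ under the embedding. The only cosmetic difference is that you re-derive that monotonicity from bi-Lipschitz invariance plus passage to a subset, whereas the paper simply invokes the fact, already recalled in the preliminaries, that $\crit_\mathscr{P}$ can only increase under co-Lipschitz maps (which bi-Lipschitz embeddings are), so your ``short verification'' is already available there.
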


Let us also note that the Mendel-Naor ultrametric skeleton Theorem readily
implies that for all $\varepsilon>0$, $\wass_p(X)$ contains a subset $S$
with $\crit_\mathscr{P} S \geq(1-\varepsilon)\dim X$ 
that embeds in an ultrametric space with distortion
$O(1/\varepsilon)$.

It would be more natural 
to replace the inequality on dimensions in Theorem \ref{theo:size}
by $\dim X > \dim Y$, but
our method cannot give that stronger statement. This seems inevitable
since the Hausdorff
dimension of $X$ gives no upper bound on the critical parameter
of its Wasserstein spaces.
\begin{prop}\label{prop:counterex}
There is an ultrametric space $X$ such that $X$ is countable
(in particular $\dim X = 0$)
but $\crit_\mathscr{P} \wass_p(X) = +\infty$ for all $p$.
\end{prop}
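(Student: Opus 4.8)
The plan is to let $X$ be a one-point compactification of a disjoint union of finite ``flat'' ultrametric spaces $F_n$ whose cardinalities grow \emph{extremely} fast, and to show that $\wass_p(X)$ then contains a huge, essentially product-like family of pairwise far measures, whose size is controlled by a Frostman (mass-distribution) argument. Concretely, fix decreasing sequences $\rho_n\downarrow 0$, $\delta_n\downarrow 0$ with $\rho_n<\delta_n$, and integers $N_n$; let $F_n$ be a set of $N_n$ points and put $X=\{\omega\}\sqcup\bigsqcup_n F_n$ with the distance $d(x,y)=\rho_n$ if $x,y\in F_n$, $d(F_n,\omega)=\delta_n$, and $d(F_n,F_m)=\delta_{\min(n,m)}$ for $n\neq m$. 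One checks directly that $d$ is an ultrametric (a tree distance), that $X$ is compact (a sequence either remains in finitely many $F_n$, hence subconverges, or meets infinitely many and then converges to $\omega$), and that $X$ is countable, so that $\dim X=0$. The weights $a_n>0$ with $\sum_n a_n=1$ and the sequences $\rho_n,\delta_n,N_n$ are chosen only at the end.

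Set $\mathcal S=\bigl\{\sum_n a_n\mu_n : \mu_n\in\pr(F_n)\bigr\}\subseteq\wass_p(X)$. By Theorem~\ref{theo:coordinates}, $(\mathcal S,\dw_p^p)$ is the $\ell^1$-sum of the rescaled simplices $\bigl(\pr(F_n),\tfrac12 a_n\rho_n^p\Vert\cdot\Vert_1\bigr)$, so its ``dimension'' explodes with $n$; but I only need one inequality, which I prove directly. In any transport plan between $\mu=\sum_n a_n\mu_n$ and $\mu'=\sum_n a_n\mu'_n$, mass leaving a ball $F_{n_0}$ travels at least $\delta_{n_0}>\rho_{n_0}$, whereas the mass that must move \emph{inside} $F_{n_0}$ is at least $\tfrac12 a_{n_0}\Vert\mu_{n_0}-\mu'_{n_0}\Vert_1$ minus what leaves; a short computation then gives, for every $n_0$,
\[\dw_p(\mu,\mu')^p\ \geqslant\ \tfrac12\,\beta_{n_0}\,\Vert\mu_{n_0}-\mu'_{n_0}\Vert_1,\qquad\text{with }\beta_n:=a_n\rho_n^p.\]
On each $\pr(F_n)$, the measures $\tfrac{2}{N_n}\mathbf 1_S$ with $S\subseteq F_n$, $|S|=\lfloor N_n/2\rfloor$, include (by a random-choice, Gilbert--Varshamov-type argument) a subfamily of size $M_n\geqslant 2^{cN_n}$ — $c>0$ absolute — that is $\tfrac12$-separated for $\Vert\cdot\Vert_1$.

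Let $\lambda_n$ be the uniform probability measure on such an $M_n$-point subfamily of $\pr(F_n)$, and let $\lambda=\bigotimes_n\lambda_n$, a Borel probability measure carried by $\mathcal S$. For $\mu=\sum_n a_n\mu_n\in\mathcal S$ and $r$ small, the displayed inequality forces any $\mu'=\sum_n a_n\mu'_n$ in the $\dw_p$-ball $B(\mu,r)$ to satisfy $\Vert\mu_n-\mu'_n\Vert_1<\tfrac14$ for every $n$ with $\beta_n>8r^p$; since the $n$-th subfamily is $\tfrac12$-separated, each such coordinate confines $\mu'_n$ to a set of $\lambda_n$-measure $\leqslant 1/M_n$, so with $n(r):=\#\{n:\beta_n>8r^p\}$,
\[\lambda\bigl(B(\mu,r)\bigr)\ \leqslant\ \prod_{n\leqslant n(r)}M_n^{-1}\ \leqslant\ 2^{-c\,N_{n(r)}}.\]
Now pick for instance $a_n=\tfrac{6}{\pi^2n^2}$, $\rho_n=1/n$, $\delta_n=2/n$, so that $\beta_n\asymp n^{-(2+p)}$ and $n(r)\asymp r^{-p/(2+p)}\to\infty$, and then $N_n:=2^n$. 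Since any fixed power of $1/r$ eventually dominates every multiple of $\log(1/r)$, this gives $\lambda(B(\mu,r))\leqslant 2^{-c2^{n(r)}}\leqslant\exp(-r^{-s})$ for every $s>0$ once $r$ is small enough; by the mass-distribution principle, $\crit_\mathscr{P}\wass_p(X)\geqslant s$ for every $s$, i.e.\ $\crit_\mathscr{P}\wass_p(X)=+\infty$. As the estimate is uniform in $p$, a single countable $X$ in fact works for all $p\in[1,\infty)$.

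The one genuinely delicate step is the lower bound on $\dw_p$ between two elements of $\mathcal S$ differing in a single coordinate: one has to rule out that rerouting mass through other balls, or through $\omega$, beats moving it inside $F_{n_0}$ — and this is exactly where ultrametricity enters, since each $F_n$ is a ball lying far from the rest of $X$ compared with its own diameter. Everything else is bookkeeping; the substance of the construction is simply that the cardinalities $N_n$ may be taken to grow fast enough, relative to the scales $\beta_n$, that the Frostman bound beats $\exp(-r^{-s})$ for every $s$.
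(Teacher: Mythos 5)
Your construction is correct, but it is a genuinely different route from the paper's. The paper takes for $X$ a single convergent sequence $\{w_n\}\cup\{w_\infty\}$ with slowly decaying distances $d(w_i,w_j)=(1+\ln i)^{-1}$, so that there is one point per scale but the scales decay only logarithmically; it then uses the coordinate formula \eqref{eq:coordinates} to produce, for every $\varepsilon>0$, a co-Lipschitz map from a Banach cube $\bcube((n^{-(1+2\varepsilon)}))$ into $(\pr(X),\dw_p^p)$, and concludes from the already computed value \eqref{eq:BC} of $\crit_\mathscr{P}$ for Banach cubes together with co-Lipschitz monotonicity, letting $\varepsilon\to 0$. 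You instead make the scales decay polynomially but let the number of points per scale explode ($N_n=2^n$ points in the cluster $F_n$), and you prove the Frostman bound by hand: a product of uniform measures on Gilbert--Varshamov-separated families inside the simplices $\pr(F_n)$, giving $\lambda(B(\mu,r))\leqslant\exp(-r^{-s})$ for every $s$ at once. Your approach buys self-containedness (no appeal to the Banach cube computation) and a one-shot bound valid for all $s$; the paper's buys brevity by recycling machinery already set up (Lemma \ref{lemm:coordinates} and \eqref{eq:BC}). Two remarks. First, the step you flag as ``genuinely delicate'' is not: since $\mu(F_{n_0})=\mu'(F_{n_0})=a_{n_0}$, every unit of excess mass of $\mu$ over $\mu'$ at a point of $F_{n_0}$ must travel distance at least $\rho_{n_0}$ wherever it goes (inside $F_{n_0}$ or out, since $\delta_{n_0}>\rho_{n_0}$), which already gives $\dw_p(\mu,\mu')^p\geqslant\tfrac12 a_{n_0}\rho_{n_0}^p\Vert\mu_{n_0}-\mu'_{n_0}\Vert_1$; alternatively this is an immediate specialization of \eqref{eq:coordinates} to the singleton balls inside $F_{n_0}$. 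Second, two routine points should be acknowledged when invoking the Frostman criterion: the ball estimate is needed for arbitrary centers in $\wass_p(X)$ and all radii, which follows by doubling the radius (any ball meeting the support of $\lambda$ is contained in a ball of twice the radius centered on the support) and by absorbing the large-$r$ regime into the constant $C$. With these standard adjustments your proof is complete.
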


%%%%%%%%
\subsection{Structure of ultrametric space}

In addition to the Mendel-Naor theorem, one important ingredient in the
first proof
of Theorem \ref{theo:size} is a structural result that seems of interest
in itself, according to which
every compact ultrametric space contains (in a weak sense) a large regular part,
which is easier to deal with.
\begin{theo}\label{theo:structure}
Given any compact ultrametric space $X$ and any $s<\dim X$,
there is a regular ultrametric space $Y=Y(k,q)$ of Hausdorff dimension
at least $s$ and a co-Lipschitz map
$\varphi : Y\to X$.
\end{theo}
By a co-Lipschitz map we mean that for some $c>0$ and all
$a,b\in Y$ one has
\[d(\varphi(a),\varphi(b))\geqslant c\cdot d(a,b).\]

This shows that, a bit like ultrametric spaces are ubiquitous in metric spaces,
\emph{regular} ultrametrics are ubiquitous in ultrametrics (and therefore
in metric spaces). The fact that we only
obtain a co-Lipschitz map is a strong limitation, but this is sufficient for our present purpose.

%%%%%%%%
\subsection{Organization of the article}

In the next section, we introduce briefly some classical definitions
and facts concerning Wasserstein spaces, ultrametric spaces and
we recall some properties of critical parameters.
In Section \ref{sec:coordinates}, we prove Theorem \ref{theo:coordinates}
and corollary \ref{coro:coordinates}. Section \ref{sec:structure}
is devoted the purely ultrametric Theorem \ref{theo:structure},
while Sections \ref{sec:size} and \ref{sec:alt} give the two proofs of 
Theorem \ref{theo:size}.
Last, Section \ref{sec:examples} gives two examples motivating
the dimension hypothesis in Theorem \ref{theo:size}, notably proving
Proposition \ref{prop:counterex}.

%%%%%%%%%%%%%%%%%%%%%%%%%%%%%%%%%%%%%%%%%%%%%%%%%%%%%%%%%%%%%%%%%%%%
\section{Preliminaries}

%%%%%%%%%%%
\subsection{Wasserstein spaces}\label{sec:Wasserstein}

We limit ourselves in this short presentation
to the case of a compact metric spaces $X$ whose distance is denoted by $d$.
The set of Borel probability
measures $\pr(X)$ is naturally endowed with a set of ``Wasserstein'' distances that echo
the distance of $X$: for any $p\in[1,+\infty)$, one sets
\[\dw_p(\mu,\nu) = \left(\inf_{\Pi\in\Gamma(\mu,\nu)} 
  \int_{X\times X} d(a,b)^p \,\Pi(\dd a\,\dd b)\right)^{\frac1p}\]
where $\Gamma(\mu,\nu)$ is the set of \emph{transport plans}, or \emph{coupling}
of $(\mu,\nu)$, that is to say the set of measures $\Pi$ on $X\times X$
that projects on each factor as $\mu$ and $\nu$:
\[\Pi(A\times X)=\mu(A),\quad \Pi(X\times B)=\nu(B)\quad \forall\mbox{ Borel } A,B\subset X.\]
In other words, a transport plan specifies a way of allocating mass distributed according to
$\mu$ so that it ends up distributed according to $\nu$; its \emph{$L^p$ cost}
\[c_p(\Pi):=\inf_{\Pi\in\Gamma(\mu,\nu) }
  \int_{X\times X} d(a,b)^p \,\Pi(\dd a\,\dd b)\]
is the total cost of this allocation if one assumes that
allocating a unit of mass from a point to a point $d$ away costs $d^p$, and
$\dw_p(\mu,\nu)^p$ is the least possible cost of a transport plan.

It is easily proved (see e.g. \cite{Villani:book} for this and much more) that
the infimum is realized by what is then called an \emph{optimal} transport plan,
that $\dw_p$ is indeed a distance and that it metricizes the weak topology.
We shall denote by $\wass_p$ the metric space $(\pr(X),\dw_p)$ and call it
the ($L^p$) \emph{Wasserstein space} of $X$.

We shall need very little more from the theory of optimal transportation, let
us only state two further facts.

First, an easy consequence of ``cyclical monotonicity'' is that
if $X$ is a metric tree (or its completion), $e$ is an edge of $X$, $\Pi$ is an optimal transport plan
between measures $\mu,\nu\in\wass_p(X)$ supported outside $e$,
and $X_1,X_2$ are the connected components of $X\setminus e$, then
\[\Pi(X_1\times X_2) = \max(\mu(X_1)-\nu(X_1),0);\]
in other words, no more mass is moved through an edge than strictly necessary.

Second, the concept of \emph{displacement interpolation} shall prove convenient.
If $X$ is a geodesic space, $\mu,\nu\in\wass_p(X)$ and $\Pi$ is an optimal transport
plan (implicitly, for the cost $c_p$ and from $\mu$ to $\nu$), then it is known
that there is a probability measure $\pi$ on the set of constant speed
geodesics $[0,1]\to X$ such that if one draws a random geodesic $\gamma$
with law $\pi$, the random pair $(\gamma(0),\gamma(1))$
of its endpoints has law $\Pi$. In other words, if $e_t:\gamma\mapsto\gamma(t)$
is the specialization map, $\Pi=(e_0,e_1)_\#\pi$. The measure $\pi$ is
called an optimal \emph{dynamical} transport plan. The interest
of this description is that $\wass_p(X)$ is geodesic, and all its
geodesics have the form $(e_{t\#}\pi)_{t\in[0,1]}$ for some optimal dynamical transport plan $\pi$.

Now, if $X$ is a metric tree, let say that two geodesics $\gamma_1$, $\gamma_2$
are \emph{antagonist} if they both follow some edge $e$, in opposite directions.
Then an optimal dynamical plan $\pi$ between any two measures is supported on a set
without any pair of antagonist geodesics. This is of course closely linked to the cyclical monotonicity.

%%%%%%%%%%%
\subsection{Critical parameters}

Critical parameters where introduced in \cite{K:Hausdorff} as bi-Lipschitz invariants
similar to Hausdorff dimension but that can distinguish between
some infinite-dimensional spaces, notably many Wasserstein spaces.

We shall not recall their construction here, but only a few facts that
we shall use in the sequel. First, to define a critical parameter
one needs a so-called \emph{scale}, a family of functions playing the role
played by $(r\mapsto r^s)$ in the definition of Hausdorff
dimension. We restrict here to the power-exponential scale
$\mathscr{P}=(r\mapsto\exp(-1/r^s))_{s>0}$ and denote
the corresponding critical parameter by $\crit_\mathscr{P}$.

Then Frostman's Lemma (see e.g. \cite{Mattila}) 
gives a characterization of $\crit_\mathscr{P}$, from
which we extract the following conditions:
\begin{itemize}
\item if $\crit_\mathscr{P}>s$ then there is $\mu\in\pr(X)$ and $C>0$ such that
      for all $x\in X$ and all $r>0$, $\mu(B(x,r)) \leq C \exp(-1/r^s)$;
\item if there is a measure $\mu\in\pr(X)$ as above, then $\crit_\mathscr{P}\geq s$.
\end{itemize}
In other words, $X$ has large critical parameter if it supports a very spread out measure.
$\crit_\mathscr{P} X$ is zero when $X$ has finite Hausdorff dimension, but turns out
to be non-zero for many interesting infinite-dimensional spaces.

The second important fact is that $\crit_\mathscr{P}$ can only increase under co-Lipschitz maps: if
$f:Y\mapsto X$ satisfies $d(f(a),f(b))\geq c\cdot d(a,b)$ for some $c>0$ and all $a,b\in Y$,
then $\crit_\mathscr{P}(X)\geq\crit_\mathscr{P}(Y)$. To prove lower bounds on critical
parameters, our strategy will be to find in our space of interest co-Lipschitz images
of spaces supporting a well spread-out measure.

To do that, we will need spaces on which such measures are easy to construct.
We shall use the \emph{Banach cubes} 
\[\bcube((a_n)_n) :=\left\{(x_n)\in\ell^1 \,\middle|\, 
  0\leq x_n \leq a_n \,\forall n\in\mathbb{N} \right\}\]
defined for any $\ell^1$ positive sequence $(a_n)$.
In \cite{K:Hausdorff}, $\bcube((a_n)_n)$ was denoted $\bcube([0,1],1,(a_n)_n)$
as a more general family of Banach cubes was defined. We have
\begin{equation}
\crit_\mathscr{P}\bcube((n^{-\alpha})) = \frac{1}{\alpha-1}
\label{eq:BC}
\end{equation}
for all $\alpha>1$; we shall only give a sketch of the
proof, as it follows the same lines as the proof of the Hilbertian
version of this estimate, given in full details in \cite{K:Hausdorff} (section 4,
see also Proposition 8.1 page 232).

\begin{proof}[Sketch of proof of \eqref{eq:BC}]
The upper bound is obtained by bounding the upper Minkowski critical parameter,
i.e. by bounding from above the number of balls of radius $\varepsilon$
needed to cover $\bcube((n^{-\alpha}))$. Let $L=L(\varepsilon/2)$ be the first integer
such that 
\[\sum_{n>L} n^{-\alpha} \leq \frac\varepsilon 2\]
and, for each $n\leq L$, consider a minimal set of points $(x^i_n)_i$ on
$[0,n^{-\alpha}]$ such that every point of this interval is at distance at most
$\varepsilon/(C n \log^2 n)$ of one of the $x^i_n$, where $C$
is such that $\sum_1^\infty (C n \log^2 n)^{-1} \leq 1/2$.
Then, any point $\bar x = (x_1,x_2,\dots)\in \bcube((n^{-\alpha}))$
is a distance at most $\varepsilon$ from one of the points
\[(x_1^{i_1},x_2^{i_2},\dots,x_L^{i_L},0,0,\dots)\]
Then, an estimate of the number of such points shows that
\[\crit_\mathscr{P}\bcube((n^{-\alpha})) \leq \frac{1}{\alpha-1}.\]

The lower bound is obtained using Frostman's Lemma. We consider
the uniform probability measure $\lambda_n$ on $[0,n^{-\alpha}]$
and the measure $\mu:= \otimes_n \lambda_n$ on $\bcube((n^{-\alpha}))$.
Then, one can prove that for all $\beta < (\alpha-1)^{-1}$, there is a constant
$C$ such that for all $\bar x\in \bcube((n^{-\alpha}))$ and all $r\leq 1$,
\[\log\mu(B(\bar x, r)) \leq -C\frac1{r^\beta}\]
(see pages 217-218 in \cite{K:Hausdorff}).
Frostman's Lemma (Proposition 3.4 of \cite{K:Hausdorff}) then ensures
\[\crit_\mathscr{P}\bcube((n^{-\alpha})) \geq \frac{1}{\alpha-1}.\]
\end{proof}

%%%%%%%%%%%
\subsection{Ultrametric spaces}\label{sec:ultrametric}

According to the Ultrametric skeleton Theorem \cite{Mendel-Naor:skeleton,Mendel-Naor:ultrametric},
given any compact metric space
$X$ and any $s<\dim X$, there is an \emph{ultrametric} space $X'$
of dimension at least $s$ and a bi-Lipschitz embedding $X'\to X$ (moreover,
the distortion of this bi-Lipschitz embedding is $O(\dim X-s)^{-1})$). This simply
stated result is very powerful, see \cite{Naor:Ribe}.

The other, older and classical fact we shall need about ultrametric spaces
is their description in terms of trees (see e.g. \cite{Naor:Ribe} Section 8.1 or \cite{GNS}). 

By a \emph{tree} we mean
a simple graph $T$ with vertex set $V$ and edge set $E$, which is connected and without
cycle; it can be infinite but is assumed to be locally finite. 
A tree is \emph{rooted} if it has a distinguished
vertex $o$, which is not a leaf, and called the root. A vertex $v\neq o$ has a
unique \emph{parent} $v^*$, defined as the neighbor of $v$ closer to $o$ than $v$; $v$ is
then said to be a \emph{child} of $v^*$. Each edge has a natural orientation, from
parent to child: $(v^*v)$ is said to be a positive edge.
A \emph{height function} is a function $h:V\to[0,+\infty)$
that is decreasing: $h(v)<h(v^*)$ for all $v\neq o$ (note that our trees
have the root on top). A \emph{synchronized rooted tree} (SRT for short)
is a rooted tree endowed with a height function such that for any maximal (finite or infinite)
oriented path $o, v_1, v_2,\dots$, we have $\lim h(v_n)=0$ (in particular, all leaves
have height $0$). The metric realization
of a SRT $T$ is the metric space obtained by taking a segment of length $h(x)-h(y)$
for each positive edge $(xy)$ and gluing them according to $T$.
It is still denoted by $T$, and the height function can be extended linearly on edges
and continuously to the metric completion $\bar T$ of $T$; this extension is
still denoted by $h$. By construction,
$h^{-1}(0)$ is the union of all leafs of $T$ and of $\bar T\setminus T$.
It is not hard to check that $h^{-1}(0)$, endowed with the restriction of the
metric of $\bar T$, is ultrametric.
We can now state the description alluded to above.

\begin{center}\begin{minipage}{.9\textwidth}
\textit{Any compact ultrametric space can be isometrically identified with
the level $h^{-1}(0)$ of the completion of a metric SRT $T$.}
\end{minipage}\end{center}

For each vertex $v$
of $T$, the set of points in $X$ that can be reached by an oriented path
from $v$ is a metric ball of $X$, denoted by $X_v$ and of diameter
$2h(v)$. All balls of $X$ are of the form $X_v$ for some $v$, e.g.
$X=X_o$.

The proof of the above folkloric fact is not difficult, and can be found up
to little notational twists in the references cited above: one simply
use the ultrametric inequality to partition $X$ into maximal proper balls,
which will be identified with the children of $o$, and then proceed recursively.

%%%%%%%%%%%%%%%%%%%%%%%%%%%%%%%%%%%%%%%%%%%%%%%%%%%%%%%%%%%%%%%%%%%%
\section{$\ell^1$ coordinates}\label{sec:coordinates}

The goal of this Section is to prove Theorem \ref{theo:coordinates}:
given a compact ultrametric space $X$ and $p\in[1,\infty)$, we want
to construct an affine isometry from $(\pr(X),\dw_p^p)$ to a
convex subset of $\ell^1$, that is a map
$\varphi:\pr(X)\to\ell^1$ with convex image and such that
\[\varphi(t\mu+(1-t)\nu)=t\varphi(\mu)+(1-t)\varphi(\nu) 
  \quad\forall \mu,\nu\in\pr(X),\forall t\in[0,1]\]
and
\[\Vert\varphi(\mu)-\varphi(\nu)\Vert_1 = \dw_p(\mu;\nu)^p \quad\forall \mu,\nu\in\pr(X).\]
Up to coefficients, this map is simply constructed by mapping a measure to the collection
of masses it gives to balls of $X$.

%%%%%%%%%%%
\subsection{A formula for the Wasserstein distance}

Let $T$ be a metric SRT such that $X=h^{-1}(0)\subset\bar T$ as explained in §\ref{sec:ultrametric}.
Then $\pr(X)$ is the subset of $\pr(\bar T)$ made of measures concentrated on $X$,
and $\dw_p$ is the restriction to $\pr(X)$ of the Wasserstein metric on $\bar T$,
also denoted by $\dw_p$. 
Given a geodesic $\gamma$ of $\bar T$, let $E(\gamma)$ be the set of edges
through which $\gamma$ runs and let $v_+(\gamma)$ be the topmost vertex on $\gamma$.
Given $e=(v^*v)\in E$, set $\delta h^p(e)=h(v^*)^p-h(v)^p$. Last,
given $e\in E$ let $e_-$ be its lower vertex and $\Gamma(e)$ 
be the set of geodesics going through $\gamma$
in any direction (not to be confused with a set of optimal transport plan!).

\begin{lemm}\label{lemm:coordinates}
For all $\mu,\nu\in\pr(X)$, we have
\begin{equation}
\dw_p(\mu,\nu)^p= 2^{p-1}\sum_{v\neq o\in V} \delta h^p(v^*v) |\mu(X_{v})-\nu(X_{v})|.
\label{eq:coordinates}
\end{equation}
\end{lemm}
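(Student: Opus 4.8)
The plan is to compute the Wasserstein cost by tracking how much mass must cross each positive edge of the tree $T$, and to reconstruct the $L^p$ cost from the distances travelled along the metric realization. First I would reduce to the tree: since $X = h^{-1}(0) \subset \bar T$ and $\dw_p$ on $\pr(X)$ is the restriction of $\dw_p$ on $\pr(\bar T)$, it suffices to work inside the metric tree $\bar T$. Fix an optimal (dynamical) transport plan $\pi$ between $\mu$ and $\nu$; by the antagonism property recalled in §\ref{sec:Wasserstein}, $\pi$ is supported on a set of geodesics no two of which traverse a common edge in opposite directions. Consequently, for each positive edge $e = (v^*v)$, the set of transported geodesics passing through $e$ do so all in the same direction, and the total mass crossing $e$ equals $|\mu(X_v) - \nu(X_v)|$ — this is exactly the cyclical-monotonicity fact quoted in the preliminaries (no more mass crosses an edge than strictly necessary, and here $X_v$ is the connected component of $\bar T \setminus e$ below $e$).

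The key step is then a \emph{pointwise} identity for the cost of a single geodesic. For a geodesic $\gamma$ in $\bar T$ with endpoints in $X = h^{-1}(0)$, running up to its topmost vertex $v_+(\gamma)$ at height $h(v_+(\gamma))$ and back down, its length is $2h(v_+(\gamma))$, so its $L^p$ cost contribution is $\big(2h(v_+(\gamma))\big)^p = 2^p\, h(v_+(\gamma))^p$. The trick is to telescope: writing the oriented path from $o$ down to $v_+(\gamma)$ as a chain of positive edges, one has
\[
h(v_+(\gamma))^p = \sum_{e \in E(\gamma),\ e \text{ above } v_+(\gamma)\text{-to-endpoint split}} \delta h^p(e),
\]
i.e. $h(v_+)^p = h(o)^p - \big(h(o)^p - h(v_+)^p\big)$ collapses to a sum of the $\delta h^p(e) = h(e^*)^p - h(e_-)^p$ over the edges lying on \emph{either} of the two descending branches of $\gamma$ — since both endpoints have height $0$, the two branches from $v_+(\gamma)$ down to the endpoints together account, via telescoping, for $2h(v_+(\gamma))^p$. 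Thus for each geodesic $\gamma$,
\[
d(\gamma(0),\gamma(1))^p = 2^{p-1}\sum_{e \in E(\gamma)} 2\,\delta h^p(e)\cdot\tfrac12,
\]
more precisely $d(\gamma(0),\gamma(1))^p = 2^{p-1}\sum_{e\in E(\gamma)} \delta h^p(e)$ after absorbing the factor, where the sum is over \emph{all} edges traversed by $\gamma$ (each counted once, as $\gamma$ goes through each of its edges exactly once in each of the up/down legs — this is where the ultrametric/tree structure and the $2^{p-1}$ factor enter).

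Finally I would integrate this identity against the optimal dynamical plan $\pi$ and swap the order of summation:
\[
\dw_p(\mu,\nu)^p = \int d(\gamma(0),\gamma(1))^p\,\dd\pi(\gamma)
  = 2^{p-1}\sum_{e\in E}\delta h^p(e)\,\pi\big(\Gamma(e)\big),
\]
and then substitute $\pi(\Gamma(e)) = |\mu(X_v)-\nu(X_v)|$ for $e = (v^*v)$ from the first step, giving the claimed formula after reindexing edges by their lower vertices $v \neq o$. Two points need care: one must justify Fubini/Tonelli for the swap (everything is nonnegative, so Tonelli applies once we know the series converges, which follows from finiteness of $\dw_p$), and one must handle the completion $\bar T$ — measures in $\pr(X)$ charge only $h^{-1}(0)$, so the "above" analysis is unaffected by the non-leaf points of $\bar T\setminus T$. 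I expect the main obstacle to be the bookkeeping in the single-geodesic telescoping identity: getting the combinatorial factor exactly right (why $2^{p-1}$ and not $2^p$ or $1$) requires being careful that each edge on $\gamma$ is counted once in the edge-sum while the geodesic length $2h(v_+)$ carries a factor $2$, and that $(2x)^p = 2^{p-1}\cdot 2x^p$ is \emph{not} an identity — rather the telescoping must be arranged so that the sum of $\delta h^p$ over both branches equals $2h(v_+)^p$ exactly, whence the cost $2^p h(v_+)^p = 2^{p-1}\cdot 2h(v_+)^p = 2^{p-1}\sum_{e\in E(\gamma)}\delta h^p(e)$.
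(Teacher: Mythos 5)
Your proposal is correct and follows essentially the same route as the paper: express the cost of each geodesic as $(2h(v_+(\gamma)))^p=2^{p-1}\sum_{e\in E(\gamma)}\delta h^p(e)$ by telescoping $\delta h^p$ along the two descending branches, integrate against an optimal dynamical plan, swap sum and integral, and identify $\pi(\Gamma(e))$ with $|\mu(X_v)-\nu(X_v)|$ via the edge-cut consequence of cyclical monotonicity. Despite some wobbly phrasing in your intermediate telescoping display, your final accounting of the $2^{p-1}$ factor is exactly the computation in the paper, so there is nothing essential to add.
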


\begin{proof}
Let $\Pi\in\Gamma(\mu,\nu)$
be an optimal transport plan. For any vertex $v\in T$, the components
of $\bar T\setminus (vv^*)$ intersect with $X$ along $X_v$ and $X_v^c:=X\setminus X_v$.
As noted in §\ref{sec:Wasserstein},
optimality implies that $\Pi(X_v\times X_v^c)=\max(0,\mu(X_v)-\nu(X_v))$
and $\Pi(X_v^c\times X_v)=\max(0,\nu(X_v)-\mu(X_v))$: the total amount of mass
that moves between $X_v$ and its complement is $|\mu(X_v)-\nu(X_v)|$.

Let $\pi$ be an optimal dynamical transport plan on $T$ such that $\Pi=(e_0,e_1)_\#\pi$.
Then for all geodesic $\gamma$ between two points of $X$, we have
\[d(\gamma(0),\gamma(1))^p=(2h(v_+(\gamma)))^p=2^{p-1}\sum_{e\in E(\gamma)}\delta h^p(e)\]
from which it follows
\begin{eqnarray}
\dw_p(\mu,\nu)^p=c_p(\Pi) &=& \int d(\gamma(0),\gamma(1))^p \,\pi(\dd \gamma) \nonumber\\
  &=& \int 2^{p-1}\sum_{e\in E(\gamma)}\delta h^p(e) \,\pi(\dd \gamma)\nonumber\\
  &=& 2^{p-1}\sum_{e\in E} \delta h^p(e) \pi(\Gamma(e)) \nonumber\\
  &=& 2^{p-1}\sum_{e\in E} \delta h^p(e) |\mu(X_{e_-})-\nu(X_{e_-})|.
\end{eqnarray}
which is \eqref{eq:coordinates}.
\end{proof}

%%%%%%%%
\subsection{Proof of Theorem \ref{theo:coordinates}}

Identify the set of non-root vertices $V\setminus \{o\}$ of $T$ with
the positive integers, so that $\ell^1=L^1(V\setminus \{o\})$ (with the counting measure).
Let $\varphi:\pr(X)\to \ell^1$ be defined by
\[\varphi(\mu)=\left(2^{p-1}\delta h^p(v^*v) \mu(X_v)\right)_{v\neq o\in V}.\]
Lemma \ref{lemm:coordinates} shows that $\varphi$ is an isometric embedding,
and it is obviously affine. It extends as an affine embedding from the space
of signed measures (in which $\pr(X)$ is convex) to $\ell^1$, so that $\varphi$
has convex image: Theorem \ref{theo:coordinates} is proved.

For Corollary \ref{coro:coordinates}, recall the fact that
a convex subset of $\ell^1$ is geodesic (thus connected by Lipschitz arcs)
and that for any metric space $(Y,d)$, $d^\beta$ defines a distance
without any non-constant Lipschitz curve whenever $\beta\in(0,1)$
(which is folklore, see e.g. Lemma 5.4 in \cite{BK:WHadamard},  for a simple proof).

Apply this to 
\[\dw_p^{\frac1\alpha}=(\dw_p^{p})^{\frac1{\alpha p}}\]
with $\beta=1/\alpha p<1$ where, thanks to Lemma \ref{lemm:coordinates}, we know that $\dw_p^p$ 
is a distance: we get that $\dw_p^{1/\alpha}$ is a distance without non-constant Lipschitz curves.
This means precisely that $\dw_p$ has no non-constant $\alpha$-H\"older curves.

%%%%%%%%%%%%%%%%%%%%%%%%%%%%%%%%%%%%%%%%%%%%%%%%%%%%%%%%%%%%%%%%%%%%
\section{Regular ultrametric parts in ultrametric spaces}\label{sec:structure}

Let us now prove Theorem \ref{theo:structure}. We are given a compact
ultrametric space $X$ and $s<\dim X$, and we look for a regular ultrametric
space $Y=Y(k,q)$ with $\dim Y\geq s$ and a co-Lipschitz map $\varphi:Y\to X$.

Let $T$ be a SRT such that $X=h^{-1}(0)$, and fix $\varepsilon>0$ such that
$s':=s+\varepsilon<\dim X$. We assume, up to a dilation of the metric, that $\diam X=1$.

%%%%%%%%%%%%
\subsection{Defining $k$ and $q$}

By Frostman's Lemma, there exists on $X$
a probability measure $\mu$ and a constant $C$ such that for all $x\in X$ and all $r$,
we have 
\begin{equation}
\mu(B(x,r))\leq C r^{s'}.
\label{eq:Frostman}
\end{equation}
Choose an integer $k$ such that $3k>3^{s'/\varepsilon}$ and $3k>2C$, and let $q=(3k)^{1/s'}$.
This choice of $q$ ensures that $Y(3k,q)$ has Hausdorff (and Minkowski) dimension equal to $s'$,
and the first bound on $k$ ensures that $Y=Y(k,q)$ has dimension at least $s$. The second bound
on $k$ is a technicality to be used later.

The SRT of $Y$ is the $k$-regular rooted
tree with height function $h(v)=\frac12 q^{-n}$ whenever $v$ is at combinatorial
distance $n$ from the root.
Our strategy is now to change slightly the metric on $X$,
then use $\mu$ to transform $T$ into a regular tree, while controlling both distances
from above and dimension from below.

%%%%%%%%%%%%
\subsection{Changing heights}

The following Lemma is folklore.
\begin{lemm}
There is an ultrametric $d'$ on $X$ such that
all balls of $X$ have diameters of the form $q^{-n}$ with integer $n$, and
\[d(x,y)\leq d'(x,y) < q\cdot d(x,y) \quad \forall x,y\in X.\]
\end{lemm}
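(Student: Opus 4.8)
The plan is to build $d'$ directly from the tree structure of $X$ by rounding all the heights of $T$ to the grid $\frac12 q^{\mathbb{Z}}$, taking care to round upwards so that distances only increase. Concretely, let $T$ be the SRT with $X=h^{-1}(0)$ and $\diam X=1$, so that the root satisfies $h(o)=\frac12$. For each vertex $v$ define
\[h'(v) = \frac12 q^{-\lfloor \log_q (2h(v)) \rfloor} = \frac12 q^{\lceil -\log_q(2h(v))\rceil},\]
i.e. $h'(v)$ is the smallest number of the form $\frac12 q^{-n}$ ($n\in\mathbb{Z}_{\geq 0}$) that is $\geq h(v)$. This gives $h(v)\leqslant h'(v) < q\cdot h(v)$ for every vertex $v$ with $h(v)>0$, with the left inequality an equality exactly when $2h(v)$ is already a power of $q$.

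Next I would check that $h'$ is still a legitimate height function on the \emph{same} combinatorial tree $T$, which is the one genuinely technical point. One needs $h'(v)\leqslant h'(v^*)$ for every non-root $v$ (monotonicity along edges) and $h'(v_n)\to 0$ along every maximal ray (so that the leaf set is unchanged). The limit condition is immediate since $h(v_n)\to 0$ forces $h'(v_n)\to 0$. Monotonicity follows because $t\mapsto \tfrac12 q^{-\lfloor\log_q(2t)\rfloor}$ is nondecreasing in $t$, so $h(v)\leqslant h(v^*)$ gives $h'(v)\leqslant h'(v^*)$. A minor subtlety: the new height function may be only weakly decreasing (equal heights on an edge), which would collapse that edge; one should allow this and simply identify a vertex with its parent whenever $h'(v)=h'(v^*)$, i.e. pass to the quotient SRT. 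This does not change the leaf set $h^{-1}(0)$ as a set, only relabels the ball structure, and balls of the new space are exactly the images of the $X_v$. Then define $d'$ to be the ultrametric on $X$ realized by this rounded SRT $T'$: for $x\neq y$, $d'(x,y) = 2h'(v)$ where $v$ is the lowest vertex with $x,y\in X_v$ (equivalently, $v_+(\gamma)$ for the geodesic $\gamma$ between them in $\bar T$), which automatically has the form $q^{-n}$.

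Finally I would verify the two asserted properties. By construction every ball of $(X,d')$ has diameter $2h'(v)=q^{-n}$ for some integer $n\geq 0$. For the comparison of metrics, fix $x\neq y$ and let $v$ be the lowest vertex of $T$ with $x,y\in X_v$; then $d(x,y)=2h(v)$ and $d'(x,y)=2h'(v)$, so $h(v)\leqslant h'(v)<q\,h(v)$ immediately yields $d(x,y)\leqslant d'(x,y)<q\cdot d(x,y)$. I expect the only real obstacle to be the bookkeeping in the previous paragraph — making sure that rounding heights upward genuinely produces a valid SRT on (a quotient of) the same tree and does not accidentally alter the leaf space $X$ — and this is handled by the quotient remark; everything else is a direct consequence of the defining inequality for $h'$.
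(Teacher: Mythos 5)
Your proof is correct and follows essentially the same route as the paper: round each height up to the nearest value of the form $\frac12 q^{-n}$, note that this keeps a valid SRT (your remark about collapsing edges with equal rounded heights is a reasonable extra precaution the paper leaves implicit), and read off $d\leq d'<q\,d$ from $h\leq h'<q\,h$. The only blemish is the closed formula $\frac12 q^{-\lfloor \log_q(2h(v))\rfloor}$, which has the sign of the exponent wrong (it should be $\frac12 q^{\lceil \log_q(2h(v))\rceil}$), but since your argument only uses the verbal definition ``smallest $\frac12 q^{-n}\geq h(v)$'' this is harmless.
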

Said otherwise, this lemma ensures that we can assume that 
$h$ takes only the values $\frac12 q^{-n} \ (n\in\mathbb{N})$
on vertices of $T$.

\begin{proof}
Consider the SRT obtained from $T$ by changing $h(v)$ into
the smallest $\frac12 q^{-n}$ larger than $h(v)$. The $0$ level
of $h'$ is still naturally identified with $X$, the induced distances are no smaller
than the original one, and they are larger by at most a factor $q$.
\end{proof}

Note that the measure
$\mu$ still satisfies \eqref{eq:Frostman} in the new metric with
the same $C$. From now on, we assume that $X$ satisfies the conclusion of the above lemma.

%%%%%%
\subsection{Regrouping branches}

Consider now the children $v_1,v_2,\dots,v_j$ of the root, and let
$V_1,\dots,V_{J}$ be a partition of $\{v_1,\dots,v_j\}$ into
at least two sets of consecutive vertices such that 
\[\frac12 C q^{-s'} \leq \sum_{v\in V_I} \mu(X_v) \leq \frac32 C q^{-s'}.\]
Such a partition exists thanks to the second bound on $k$, which
ensures that $C q^{-s'} < \frac12$.
Let $T^1$ be the SRT obtained from $T$ by (see figure \ref{fig:regrouping}): 
\begin{itemize}
\item adding a degree two vertex at height $q^{-s'}$ on the edge $(ov_i)$ whenever $h(v_i)<q^{-s'}$,
\item reassigning the name $v_i$ to this new added vertex,
\item then for each $I\in\{1,\dots,J\}$, merging all $v_i\in V_I$ into a new vertex of height
      $q^{-s'}$, whose children are the union of all children of the $v_i\in V_I$.
\end{itemize}

\begin{figure}[htp]\begin{center}
\labellist
\small \hair 3pt
\pinlabel $o$ [b] at 60 97
\pinlabel $o$ [b] at 213 97
\pinlabel $o$ [b] at 365 97
\pinlabel $1$ at 139 96
\pinlabel $q^{-1}$ at 139 65
\pinlabel $q^{-2}$ at 139 35
\pinlabel $q^{-3}$ at 139 4
\pinlabel $1$ at 291 96
\pinlabel $q^{-1}$ at 291 65
\pinlabel $q^{-2}$ at 291 35
\pinlabel $q^{-3}$ at 291 4
\endlabellist
\includegraphics[width=.9\linewidth]{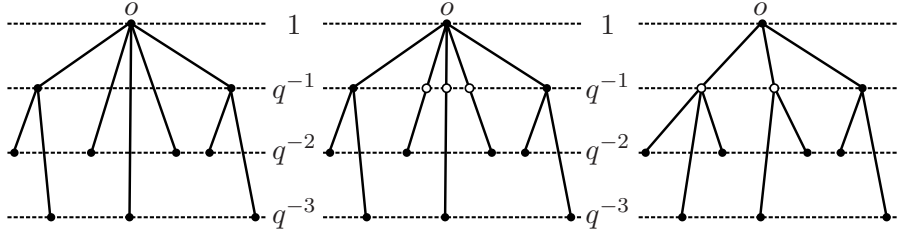}
\caption{The regrouping process: left the original tree (partially represented),
  center the tree with 
  added vertices, right the tree with regrouped branches.}\label{fig:regrouping}
\end{center}\end{figure}

Below depth $1$, the tree is unchanged and there is therefore
a natural identification of $X$ with the level $0$ in $T^1$, and the
distance induced by $T^1$ is no larger than the original one (it can be much smaller
for some pair of points, and this is why we will only obtain a co-Lipschitz map).
Another way to put it is that there is a bijective and $1$-Lipschitz map $f^1:X\to X^1$
where $X^1$ is the level $0$ of $T^1$. Moreover $\mu^1:=f^1_\#\mu$ is a probability
measure on $X^1$ satisfying both 
\[\mu^1(X^1_v)\leq C (\diam X^1_v)^{s'} = C \cdot q^{-ns'}\]
when $v$ has height $q^{-n}$ with $n>1$ and
\[\frac12 C q^{-s'} \leq \mu^1(X^1_v) \leq \frac32 C q^{-s'}\]
when $v$ is a child of the root (i.e., has height $q^{-1}$).

We can then inductively construct a sequence $T^2, T^3, \dots, T^k, \dots$
of SRT by performing the same 
regrouping process at depth $k$ (i.e., the vertices of height $q^{-(k-1)}$
play the role played above by $o$). First this construction ensures that in $T^k$,
a child $v$ of any vertex $v^*$ of height $h^k(v^*)=q^{-n}$ for any $n< k$ must have
height $h^k(v)=q^{-(n+1)}$.
We also get a system of bijective $1$-Lipschitz maps $f^k:X^{k-1}\to X^k$
where $X^k$ is the ultrametric space defined by $T^k$, and probability measures
$\mu^k$ that satisfy 
\[\mu^k(X^k_v)\leq C (\diam X^k_v)^{s'} = C \cdot q^{-ns'}\]
when $v$ has height $q^{-n}$ with $n>k$ and
\[\frac12 C q^{-ns'} \leq \mu^k(X^k_v) \leq \frac32 C q^{-ns'}\]
when $v$ has height $q^{-n}$ with $n\leq k$.

Since $T_k$ and $T_{k+1}$ are isomorphic up to depth $k$, we get a limit
SRT $T^\infty$, defining an ultrametric space $X^\infty$
and there is a $1$-Lipschitz map $f:X\to X^\infty$ obtained by composing all
$f^k$. This map needs not be bijective, because some distances may have 
been reduced to zero in the process, collapsing some points together; but
it certainly is onto.

Moreover, the probability measure $f^\infty_\#\mu=\mu^\infty$ satisfies
\[\frac12 C q^{-ns'} \leq \mu^\infty(X^\infty_v) \leq \frac32 C q^{-ns'}\]
whenever $v$ has depth $n$.
This ensures that any vertex in $T^\infty$ (except possibly the root) has at least
\[\frac{\frac12 C q^{-ns'}}{\frac32 C q^{-(n+1)s'}}=\frac{q^{s'}}{3} = k\]
children. In particular, $T^\infty$ has a subtree isomorphic to the SRT of $Y$, so that
there is an isometric embedding $g:Y\to X^\infty$.

Composing $g$ with a right inverse of $f^\infty$ (which exists at worst in the measurable category
in virtue of a classical selection theorem), we get a co-Lipschitz map $Y\to X$, which proves Theorem
\ref{theo:structure}
(recall that our choice of parameter ensures $\dim Y\geq s$).

%%%%%%%%%%%%%%%%%%%%%%%%%%%%%%%%%%%%%%%%%%%%%%%%%%%%%%%%%%%%%%%%%%%%
\section{Size of Wasserstein spaces}\label{sec:size}

%%%%%%%%%%%
\subsection{The case of regular ultrametric spaces}

In the previous Section, we saw that ultrametric spaces contain
co-Lipschitz images of large regular ultrametric spaces. To prove Theorem
 \ref{theo:size} we therefore have mainly left to estimate the size
of Wasserstein spaces of regular ultrametric spaces.

\begin{prop}
Given any regular compact ultrametric space $Y=Y(k,q)$ and any $p\geq 1$, we have
\[\crit_\mathscr{P} \wass_p(Y) = \log_q(k)=\dim Y.\]
\end{prop}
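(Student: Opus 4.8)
The plan is to establish the two inequalities $\crit_\mathscr{P}\wass_p(Y)\leq\log_q k$ and $\crit_\mathscr{P}\wass_p(Y)\geq\log_q k$ separately, exploiting Theorem~\ref{theo:coordinates}, which tells us that $(\pr(Y),\dw_p^p)$ is affinely isometric to a convex subset of $\ell^1$; equivalently $\wass_p(Y)$ is a subset of $\ell^1$ with the snowflaked norm $\Vert\cdot\Vert_1^{1/p}$. The key observation is that a snowflaking $\Vert\cdot\Vert_1^{1/p}$ changes a power-exponential critical parameter in a controlled way: since $\exp(-1/r^s)$ pulled back through $r\mapsto r^{1/p}$ becomes $\exp(-1/r^{s/p})$, the operation of snowflaking by exponent $1/p$ simply multiplies $\crit_\mathscr{P}$ by $p$. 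So it suffices to show that the convex subset $\varphi(\pr(Y))\subset\ell^1$ has $\crit_\mathscr{P}$ equal to $p\log_q k$, and by the Banach cube estimate \eqref{eq:BC} we expect this to reduce to identifying $\varphi(\pr(Y))$ as comparable (bi-Lipschitz, or squeezed between two Banach cubes) to $\bcube((n^{-\alpha}))$ for the appropriate $\alpha$.

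First I would write down the coordinates explicitly for $Y=Y(k,q)$. The SRT of $Y$ is the $k$-regular rooted tree with a vertex $v$ at combinatorial depth $n$ having height $h(v)=\tfrac12 q^{-n}$, so $\delta h^p(v^*v)=2^{-p}(q^{-p(n-1)}-q^{-pn})=2^{-p}q^{-pn}(q^p-1)$. Hence the $v$-coordinate of $\varphi(\mu)$ is $2^{p-1}\cdot 2^{-p}(q^p-1)q^{-pn}\mu(Y_v)=\tfrac12(q^p-1)q^{-pn}\mu(Y_v)$, where $Y_v$ ranges over the balls of $Y$ of diameter $q^{-n}$, and at depth $n$ there are exactly $k^n$ such balls, partitioning $Y$. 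Thus $\varphi(\pr(Y))$ is, up to the fixed constant $\tfrac12(q^p-1)$, the set of sequences indexed by $(n,\text{ball at depth }n)$ whose $n$-th block of $k^n$ entries is $q^{-pn}$ times a sub-probability vector subject to the constraint that the block at depth $n+1$ refines the block at depth $n$ and the total mass in each block is $1$. The crucial point is that for estimating $\crit_\mathscr{P}$ only the \emph{scale} of the coordinates matters: I would sandwich $\varphi(\pr(Y))$ between two explicit Banach cubes. Concretely, forgetting the refinement/normalization constraints but keeping only the range, each depth-$n$ block lives in a product of $k^n$ intervals of length $\sim q^{-pn}$; since the number of coordinates of size $\sim q^{-pn}$ is $k^n$, reindexing by decreasing coordinate size shows this product is bi-Lipschitz (for the relevant asymptotics) to $\bcube((m^{-\alpha}))$ where the $m$-th coordinate has size $\sim m^{-\alpha}$ with $m\sim k^n$, so $q^{-pn}\sim m^{-\alpha}$ gives $\alpha=p\log_k q=p\,(\log q)/(\log k)$. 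Then \eqref{eq:BC} yields $\crit_\mathscr{P}=1/(\alpha-1)$... but this does not match; the resolution is that the \emph{constraint} $\sum_{v\text{ at depth }n}\mu(Y_v)=1$ drastically limits the cube, and the honest computation must keep it.

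I expect the main obstacle to be handling the normalization/refinement constraint correctly, i.e.\ showing that despite it, the set $\varphi(\pr(Y))$ still supports a well-spread-out measure (for the lower bound) and is still covered by few balls (for the upper bound). For the \textbf{lower bound} I would proceed directly via Frostman's Lemma in $\wass_p(Y)$ rather than through the cube: build a measure $\mathbb{P}$ on $\pr(Y)$ by choosing, independently over all vertices $v$, how the mass $\mu(Y_v)$ is split among the $k$ children of $v$ according to a fixed distribution on the simplex (e.g.\ the uniform/Dirichlet distribution), which produces a genuine random probability measure on $Y$; then estimate $\log\mathbb{P}(B_{\dw_p}(\mu,r))$ from above. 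Using the $\ell^1$ formula \eqref{eq:coordinates}, a $\dw_p$-ball of radius $r$ forces, roughly, $\sum_n q^{-pn}\sum_{v\text{ depth }n}|\mu(Y_v)-\nu(Y_v)|\lesssim r^p$; truncating at the depth $N$ where $q^{-pN}\sim r^p$, i.e.\ $N\sim (p\log q)^{-1}\log(1/r)$, the number of "free" coordinates controlled is $\sim\sum_{n\leq N}k^n\sim k^N\sim (1/r)^{\,p\log_q k}$, and each independent split being pinned to precision $\sim r^p$ costs a factor like $r^{p}$ in probability, giving $\log\mathbb{P}(B(\mu,r))\lesssim -k^N\log(1/r)\sim -(1/r)^{\,p\log_q k}\log(1/r)$, hence $\leq -C(1/r)^{\beta}$ for every $\beta<p\log_q k$; Frostman then gives $\crit_\mathscr{P}\wass_p(Y)\geq p\log_q k$, and dividing by $p$ for the snowflake — wait, the snowflake is already incorporated since we worked directly with $\dw_p$ — gives $\geq\log_q k$. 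For the \textbf{upper bound} I would bound the upper Minkowski critical parameter: given $\varepsilon$, set $N$ with $q^{-N}\sim\varepsilon$ (now $\dw_p$, not $\dw_p^p$); a measure is determined up to $\dw_p$-distance $\varepsilon$ by its values $\mu(Y_v)$ for $v$ of depth $\leq N$, and quantizing each of those $\sim k^N\sim\varepsilon^{-\log_q k}$ numbers to precision $\sim\varepsilon^{p}/(\text{polylog})$ as in the proof of \eqref{eq:BC} yields at most $\exp(C\varepsilon^{-\log_q k}\log(1/\varepsilon))$ balls, whence $\crit_\mathscr{P}\wass_p(Y)\leq\log_q k$. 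Finally, $\log_q k=\dim Y$ is the standard computation for regular ultrametric spaces (the covering number of $Y(k,q)$ at scale $q^{-n}$ is exactly $k^n=(q^{-n})^{-\log_q k}$), completing the proof.
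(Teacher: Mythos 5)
Your overall architecture --- upper bound by a direct covering count, lower bound by a Frostman small-ball estimate performed directly in $\wass_p(Y)$ for a random-split measure, with the snowflake exponent $1/p$ handled by rescaling --- is a legitimate route and differs from the paper, which obtains the upper bound by citing $\crit_\mathscr{P}\wass_p(Y)\leq\udim Y$ from \cite{K:Hausdorff} and the lower bound by building an explicit co-Lipschitz embedding of a Banach cube into $(\pr(Y),\dw_p^p)$ via near-equal splits $\mu(Y_v)=\mu(Y_{v^*})(1+\varepsilon a_v)/k$ and then invoking \eqref{eq:BC}. Your covering argument for the upper bound is fine (it essentially reproves the cited estimate in this special case).

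However, the quantitative heart of your lower bound is wrong as written. From $q^{-pN}\sim r^p$ one gets $N\sim\log_q(1/r)$, not $N\sim(p\log q)^{-1}\log(1/r)$, and then $k^N\sim(1/r)^{\log_q k}$, not $(1/r)^{p\log_q k}$. More importantly, ``each independent split being pinned to precision $\sim r^p$ costs a factor like $r^p$'' is not a valid accounting: the coordinate $\nu(Y_v)$ at depth $n$ has natural scale $\sim k^{-n}$ (so a precision of $r^p$ may cost nothing at deep levels), and the constraint coming from \eqref{eq:coordinates} is a \emph{shared} weighted $\ell^1$ budget, not a per-coordinate one. The exponent $p\log_q k$ you arrive at is in fact impossible, since it contradicts your own upper bound $\log_q k$; the aside ``dividing by $p$ for the snowflake --- wait, the snowflake is already incorporated'' is exactly where this inconsistency surfaces and is waved away rather than resolved. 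A correct small-ball estimate must exploit the budget sharing: for instance, restricting to the $\sim k^n$ coordinates at a single depth $n$ slightly below $\log_q(1/r)$ and conditioning on the previous level, the per-coordinate share of the budget is $r^pq^{pn}k^{-n}$, which compared with the conditional scale $\sim k^{-n}$ gives a factor $\sim r^pq^{pn}$ per coordinate, hence $-\log\mathbb{P}\bigl(B(\mu,r)\bigr)\gtrsim k^n\,p\bigl(\log(1/r)-n\log q\bigr)\geq c\,(1/r)^\beta$ for every $\beta<\log_q k$, which is the statement actually needed; this also requires keeping the parent masses bounded below, which is exactly what the paper's $\varepsilon$-perturbed equal-split construction guarantees. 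This shared-budget analysis (equivalently, the genuine small-ball estimate for a Banach cube, which you explicitly set aside when you observed that the naive cube sandwich ``does not match'') is precisely what the paper outsources to \eqref{eq:BC}; as it stands, your proposal is missing it.
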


\begin{proof}
Since the upper Minkowski dimension of $Y$ is equal to its Hausdorff dimension,
the upper bound $\crit_\mathscr{P} \wass_p(Y)\leq \dim Y$ is given by Proposition 7.4 in 
\cite{K:Hausdorff}.
To prove the lower bound, we shall embed large Banach cubes in $\wass_p(Y)$. Fix any 
positive $\varepsilon$.

Label as usual the vertices of the SRT for $Y$ (i.e, its balls) by the finite words on the 
letters $\{1,\dots, k\}$. A vertex $v=i_1i_2\dots i_n$ is said to have \emph{depth} $n$,
has height $\frac12q^{-n}$, and there are $k^n$ of them. Let $V'$ be the set of
vertices $v=i_1\dots i_n$ such that $i_n<k$, and define a map $\varphi:[0,1]^{V'}\to\pr(Y)$
as follows. The measure $\mu=\varphi((a_v)_{v\in V'})$ is determined 
by the weight it gives to the balls $Y_v$ ($v\in V$) of $Y$, which we define
recursively on the depth to be:
\begin{itemize}
\item $\mu(Y)=1$,
\item $\displaystyle\mu(Y_v)=\mu(Y_{v^*})\cdot\frac{1+\varepsilon a_v}{k}$ when $v\in V'$,
\item $\displaystyle\mu(Y_v)=\mu(Y_{v^*})-\sum_{w^*=v^*, w\neq v} \mu(Y_w)$ when $v=i_1\dots i_{n-1}k$.
\end{itemize}
In other words, at each level we split mass almost equally between the children, allowing it to be
slightly larger than average for the $k-1$ first children and consequently slightly smaller
for the last one.

The first and second items are mandatory to get a well defined probability measure,
taking $\varepsilon$ small enough ensures that all these values are positive, and this construction
ensures that 
\[\mu(Y_v) \geq \left(\frac{1-(k-1)\varepsilon}{k}\right)^n\]
whenever $v$ has depth $n$.

From \eqref{eq:coordinates} in Section \ref{sec:coordinates}, we deduce that
for all $a=(a_v), b=(b_v)\in [0,1]^{V'}$: 
\[\dw_p(\varphi(a),\varphi(b))^p \geq 
  C \sum_{v\in V'} q^{-pn}\left(\frac{1-(k-1)\varepsilon}{k}\right)^n |a_v-b_v| \]
where the positive constant $C$ depends on $q,k,p,\varepsilon$ and $n=n(v)$ is the depth.

Since there are $(k-1)k^{n-1}$ vertices of depth $n$ in $V'$, if we identify
the vertices with the positive integers in a way that makes the depth function $n$ non-decreasing,
we can identify the sequence
\[\left(q^{-p}\frac{1-(k-1)\varepsilon}{k}\right)^n(v)\]
where $v$ runs over the vertices with an integer-indexed sequence $(a_m)$ where
\[a_m =\Theta\left(q^{-p}\frac{1-(k-1)\varepsilon}{k} \right)^{\log_k m}
       = \Theta \left(m^{\log_k(\frac{q^{-p}}{k}-O(\varepsilon))}\right).\]
It follows that
there is a co-Lipschitz map from $\bcube((m^{-\alpha})_m)$ to $(\pr(Y),\dw_p^p)$
with
\[\alpha = 1+p\frac{\ln q}{\ln k} -O(\varepsilon) = 1+\frac{p}{\dim Y} -O(\varepsilon).\]
As a consequence,
\[\crit_\mathscr{P} (\pr(Y),\dw_p^p) \geq \crit_\mathscr{P} \bcube((n^{-\alpha}))
  = \frac{1}{\alpha-1}\]
and letting $\varepsilon$ go to $0$, we have $\crit_\mathscr{P} (\pr(Y),\dw_p^p)\geq \dim Y/p$
from which $\crit_\mathscr{P} \wass_p(Y) \geq \dim Y$ follows.
\end{proof}

%%%%%%%%%%%
\subsection{The main Theorem and corollaries}

Now the proof of Theorem \ref{theo:size} is easy: we are given a compact metric space $X$,
and we want to prove 
\[\crit_\mathscr{P} \wass_p(Y) \geq \dim X.\]
If $\dim X=0$ there is nothing 
to prove; assume otherwise and choose arbitrary $s<s'<\dim X$. There is a bi-Lipschitz embedding of
an ultrametric space $X'$ into $X$ with $\dim X'\geq s'$ (by the Mendel-Naor ultrametric skeleton
 theorem)
and there is a co-Lipschitz embedding of a regular ultrametric space $Y$ into $X'$
with $\dim Y\geq s$ (by Theorem \ref{theo:structure}). Composing these embeddings
and applying the resulting map to measures, we get a co-Lipschitz embedding
$\wass_p(Y)\hookrightarrow \wass_p(X)$. This implies that 
\[\crit_\mathscr{P}\wass_p(X) \geq \crit_\mathscr{P}\wass_p(Y) = \dim Y \geq s\]
and since $s<\dim X$ is arbitrary, we finally get $\crit_\mathscr{P}\wass_p(X)\geq \dim X$.

The two corollaries then follow directly.
Assume $X$ is a compact space; we just saw that
$\crit_\mathscr{P} \wass_p(X)\geq \dim X$, and we proved in \cite{K:Hausdorff} that
$\crit_\mathscr{P} \wass_p(X)\leq \udim X$. If $\dim X=\udim X=d$, then we get
$\crit_\mathscr{P} \wass_p(X)=d$. If $\dim X>\udim X'$, we get 
$\crit_\mathscr{P} \wass_p(X)>\crit_\mathscr{P} \wass_{p'}(X')$, and
there cannot be any bi-Lipschitz (or even co-Lipschitz) map from the first Wasserstein
space to the second one.

%%%%%%%%%%%%%%%%%%%%%%%%%%%%%%%%%%%%%%%%%%%%%%%%%%%%%%%%%%%%%%%%
\section{An alternative proof of Theorem \ref{theo:size}}\label{sec:alt}

We can prove Theorem \ref{theo:size} without the intermediate of ultrametric spaces,
by using instead a sequence of points with controlled distances.
\begin{lemm}\label{lem:sequence}
If $X$ is a metric space of Hausdorff dimension $d$, then for all
$d'<d$ there exist a constant $C$ and
a sequence of points $(q_i)$ in $X$ such that for all $i<j$ it holds
$d(q_i,q_j)\geq C i^{-1/d'}$.
\end{lemm}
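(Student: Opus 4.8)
The plan is to extract the sequence greedily, using the fact that a space of Hausdorff dimension $d$ cannot be covered efficiently by small balls. Fix $d'<d$; since $\dim X = d > d'$, the $d'$-dimensional Hausdorff measure of $X$ is infinite, in particular $X$ is not totally bounded in the quantitative sense that for every $r>0$ the minimal number $N(r)$ of balls of radius $r$ needed to cover $X$ satisfies $\limsup_{r\to 0} N(r) r^{d'} = +\infty$ (otherwise $\mathscr{H}^{d'}(X)$ would be finite). More precisely, I would first show that there is a constant $c>0$ such that for arbitrarily small $r$, one needs at least $c\, r^{-d'}$ balls of radius $r$ to cover $X$; equivalently, there is an $r$-separated subset of $X$ of cardinality at least $c\, r^{-d'}$. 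This is the standard comparison between Hausdorff dimension, box dimension, and packing.

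Next I would build the sequence by concatenating separated sets at a geometric sequence of scales. Choose a decreasing sequence of radii $r_k = 2^{-k} r_0$ and, for each $k$, a maximal $r_k$-separated set $S_k \subset X$; by the previous paragraph we may assume, passing to a subsequence of scales if necessary, that $|S_k| \geq c\, r_k^{-d'}$. List the points of $S_1$, then those of $S_2$, then $S_3$, and so on, to form the sequence $(q_i)$. The key point is a counting estimate: the number of points listed before we finish $S_k$ is at most $\sum_{\ell \le k} |S_\ell|$, which — because $|S_\ell|$ grows geometrically like $2^{\ell d'}$ — is $O(|S_k|) = O(r_k^{-d'})$. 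Hence a point $q_i$ belonging to $S_k$ has index $i = O(r_k^{-d'})$, i.e. $r_k = O(i^{-1/d'})$, and conversely $i \le C' r_k^{-d'}$ for a uniform constant.

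Finally I would verify the separation bound $d(q_i,q_j)\geq C i^{-1/d'}$ for $i<j$. If $q_i \in S_k$ and $q_j \in S_\ell$ with $k \le \ell$: if $k=\ell$ then $d(q_i,q_j) \ge r_k$ by $r_k$-separation of $S_k$; if $k<\ell$, I would need a mild extra precaution, because a point of a later (finer) separated set could land very close to a point of an earlier one. To avoid this I would choose each $S_{k+1}$ to also be $r_{k+1}$-separated from $S_1 \cup \dots \cup S_k$ (possible by taking $S_{k+1}$ maximal $r_{k+1}$-separated containing the already-chosen points, or simply by discarding the few offending points, which does not hurt the cardinality lower bound up to a constant). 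Then in all cases $d(q_i,q_j) \ge r_\ell \ge r_k \ge \text{const}\cdot i^{-1/d'}$ using $i = O(r_k^{-d'})$ from the counting step, which gives the claim with a suitable $C$.

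The main obstacle is the interaction between scales in the last step: a naive concatenation of separated sets at different scales does not a priori keep points from different levels far apart, and one must be slightly careful to arrange cross-scale separation without destroying the cardinality lower bounds that drive the index-versus-scale estimate. Everything else is the routine translation between Hausdorff dimension and packing numbers.
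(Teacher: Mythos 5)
There is a genuine gap, and it sits exactly where you flagged the ``main obstacle'': the cross-scale separation. Your construction only keeps the points of the finer set $S_\ell$ at distance $r_\ell$ from the earlier points, and your final chain $d(q_i,q_j)\geq r_\ell\geq r_k$ is backwards, since $r_\ell\leq r_k$ when $\ell\geq k$. What the lemma demands is much stronger: every later point, at every finer scale, must stay at the \emph{fixed} distance $Ci^{-1/d'}\sim r_k$ from each $q_i\in S_k$ (for instance, all points after $q_1$ must avoid a ball of fixed radius $C$ around $q_1$). To enforce this you would need to know that, after deleting fixed neighbourhoods of the finitely many points already chosen, what remains of $X$ still has covering numbers $\gtrsim r^{-d'}$ at all finer scales; lower covering bounds are simply not inherited by complements of balls. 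This is not a repairable technicality: your steps after the first use nothing about $X$ beyond lower bounds $N(r)\geq c\,r^{-d'}$ along a sequence of scales (Minkowski-type largeness) plus general facts about maximal separated sets, and Section~\ref{sec:examples} exhibits a compact ultrametric space of lower Minkowski dimension $1$ --- hence satisfying all of those bounds for every $d'<1$ --- in which no sequence with $d(q_i,q_j)\geq Ci^{-1/d'}$ exists for any $d'>0$ (Proposition~\ref{prop:smallparts} and its corollary). So any argument of this shape must fail; the hypothesis that has to enter is Hausdorff, not box, dimension.

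The missing idea is a measure-theoretic pigeonhole. The paper's proof applies Frostman's lemma to get, for $d''\in(d',d)$, a probability measure $\mu$ with $\mu(B(p,r))\leq C_1 r^{d''}$ on its support, fixes radii $r_i\sim i^{-(1+\varepsilon)/d''}$ whose associated masses $a_i$ sum to less than $1$, and chooses $q_j\in\supp\mu$ greedily outside $\bigcup_{i<j}B(q_i,r_i)$; this is always possible because the forbidden union has $\mu$-measure $<1$, and it yields $d(q_i,q_j)\geq r_i$ for all $i<j$, i.e.\ separation governed by the \emph{earlier} index, as required. The measure is precisely what certifies that the part of $X$ far from the already chosen points is never exhausted --- the fact your covering-number bookkeeping cannot provide. (A secondary issue: your counting step $\sum_{\ell\leq k}|S_\ell|=O\bigl(r_k^{-d'}\bigr)$ uses the lower bound on $|S_\ell|$ as if it were an upper bound; Hausdorff dimension gives no upper bound on the size of maximal separated sets, though this alone is less damaging than the separation problem.)
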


\begin{proof}
This is a consequence of Frostman's Lemma. Given $d''\in(d',d)$, there
is a probability measure $\mu$ on $X$ such that $\mu(B(p,r))\leq C_1 r^{d''}$
for some constant $C_1$ and all $p$ in its support.
For all integer $i>0$, let $a_i = C_2 i^{-(1+\varepsilon)}$
where $\varepsilon$ will be chosen small afterward, and $C_2$ is such that
$\sum a_i =1$. Let $r_i= (a_i/C_1)^{\frac1{d''}}$

Choose $q_1\in \supp\mu$ arbitrarily; then $\mu(B(q_1,r_1))\leq a_1 <1$
so there is a $q_2$ outside $B(q_1,r_1)$. We construct recursively
$B_j= B(q_j, r_j)$ and $q_j$ outside $\cup_{i<j} B_i$. this is possible because
\[\mu(B_1\cup \dots B_{j-1})\leq a_1+\dots a_{j-1} <1 = \mu(X).\]

We then get that $d(q_i,q_j)$ is at least $r_i= C i^{-\frac{1+\varepsilon}{d''}}$
and we only have left to choose $\varepsilon$ and $d''$ appropriately.
\end{proof}

Let $d'<d$ be fixed, and $(q_i)$ be a sequence of points of $X$
as given by the lemma. Consider the map
\begin{align*}
\Phi : [0,1]^{\mathbb{N}} &\to \wass_p(X) \\
\bar x =(x_1,\dots) &\mapsto \sum_{i\geq 1} b_i x_i \delta_{q_{i+1}} 
  + (1- \sum_{i\geq 1} b_i x_i) \delta_{q_1}
\end{align*}
where $b_i=C_3i^{-(1+\varepsilon)}$ with $\varepsilon$ arbitrarily small
and $C_3=C_3(\varepsilon)$ is such that $\sum b_i\leq 1$.

Then we easily get the lower estimate
\[\dw_p^p(\Phi(\bar x),\Phi(\bar y)) \geq 
  \sum_{i\geq 1} b_i|x_i-y_i| \cdot C^p (i+1)^{-\frac{p}{d'}}.\]
Indeed, any transport plan from $\Phi(\bar x)$ to $\Phi(\bar y)$
must move a mass at least $b_i |x_i-y_i|$ from or to the point
$q_i$, thus this amount is moved by a distance at least $C (i+1)^{-\frac{1}{d'}}$.

We can identify $[0,1]^{\mathbb{N}}$ with any $\bcube((n^{-\alpha}))$
by suitable dilation along each coordinate.
Taking $\alpha=p/d'+1+\varepsilon$, we get that 
\[W_p^p(\Phi(\bar x),\Phi(\bar y)) \geq C_4 d(\bar x, \bar y)\]
where the distance on the right-hand side is obtained from
$\bcube((n^{-\alpha}))$ by the identification.

From \eqref{eq:BC} page \pageref{eq:BC}
we know that there is a probability measure $\mu$ on
$\bcube((n^{-\alpha}))$ such that 
\[\log \mu(B(\bar x, r) \leq C_5 r^{\frac{-1}{\alpha-1}} 
  = C_5 r^{\frac{-1}{p/d'+\varepsilon}}\]
(this is Frostman's Lemma, or rather what one proves to
bound from below the critical parameter of the Banach cube).

Consider the pushed forward measure $\Phi_\#\mu$ on $\wass_p(X)$:
for all $\bar x\in\bcube((n^{-\alpha}))$ we have 
\begin{align*}
\log \Phi_\#\mu(B(\Phi(\bar x),r)) &\leq \log \mu(B(\bar x, r^p/C_4)) \\
  &\leq C_6 r^{-\frac{p}{p/d'+\varepsilon}}
\end{align*}
This shows that the image of $\Phi$, and therefore $\wass_p(X)$ as well,
has $\mathscr{P}$-critical parameter at least
\[\frac{p}{\frac p{d'} + \varepsilon}\]
for all $\varepsilon >0$ and all $d'<d$. Letting $\varepsilon \to 0$
and $d'\to d$, we get that $\crit_\mathcal{P} \wass_p(X) \geq d$, as desired.

\begin{rema}
One could think that Lemma \ref{lem:sequence} should hold under a
condition on Minkowski dimension rather than Hausdorff dimension.
In next section an example is given showing that this is far from being true.
\end{rema}

%%%%%%%%%%%%%%%%%%%%%%%%%%%%%%%%%%%%%%%%%%%%%%%%%%%%%%%%%%%%%%%%%%%%%%%%%%%%%%
\section{Concluding examples}\label{sec:examples}

%%%%%%%%%%%%%%
\subsection{A large space with small parts}

To show that the Hausdorff dimension hypothesis in Lemma \ref{lem:sequence}
cannot easily be relaxed, let us prove the following.
\begin{prop}\label{prop:smallparts}
There is a compact metric space $X$ with lower Minkowski dimension equal to $1$,
all of whose proper closed subset have vanishing lower Minkowski dimension.
\end{prop}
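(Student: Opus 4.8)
The goal is to construct a compact metric space $X$ with lower Minkowski dimension equal to $1$, yet such that every proper closed subset has vanishing lower Minkowski dimension. The natural setting is an ultrametric space built from an SRT, so that subsets are easy to describe combinatorially and covering numbers at scale $q^{-n}$ can be read off from the tree. The idea is to produce a space whose branching is \emph{extremely non-uniform along the depth}: the number of vertices at depth $n$ should oscillate wildly, so that along a well-chosen subsequence of scales $\varepsilon_n \to 0$ the covering number $N(\varepsilon_n)$ grows like $\varepsilon_n^{-1}$ (forcing lower Minkowski dimension $\geq 1$, hence $=1$ after also checking the upper bound or arranging $N(\varepsilon)$ to be small on a complementary subsequence), while every proper closed subset $F \subsetneq X$ misses at least one ball $X_v$, and from that missing ball onward the remaining tree is so sparse that $N_F(\varepsilon)$ grows subpolynomially in $1/\varepsilon$.

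\textbf{Key steps.} First I would fix a rapidly increasing sequence of integers $(n_k)$ and a regular tree structure between consecutive levels: between depth $n_{k-1}$ and depth $n_k$ let every vertex have exactly one child (a long ``stem''), and at depth $n_k$ let every vertex have a large number $b_k$ of children, with $b_k \to \infty$ very fast. Choosing heights $h(v) = q^{-(\text{depth})}$ as usual, the space $X = h^{-1}(0)$ is a compact ultrametric space. Second, I would compute $N(\varepsilon)$: for $\varepsilon$ of order $q^{-m}$, $N(\varepsilon)$ equals the number of vertices at depth $m$, which is $\prod_{k : n_k < m} b_k$. By choosing the $n_k$ and $b_k$ so that this product is roughly $q^{m}$ exactly when $m = n_{k}$ (i.e.\ $\prod_{j \leq k-1} b_j \approx q^{n_k}$), one gets $N(q^{-n_k}) \approx q^{n_k} = \varepsilon^{-1}$ along that subsequence, so $\underline\dim_M X \geq 1$; and since between such levels the tree does not branch, $N(\varepsilon)$ stays at that same value until the next branching level, which keeps $\underline\dim_M X \leq 1$ (the liminf of $\log N(\varepsilon)/\log(1/\varepsilon)$ is attained along the stems where $\log N$ is constant while $\log(1/\varepsilon)$ increases up to $\log q^{n_{k+1}}$, forcing the dimension down; one tunes the gaps $n_{k+1}/n_k$ so this liminf is exactly $1$, not $0$). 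Third, and this is the heart of the argument, I would take any proper closed $F \subsetneq X$: it is contained in $X \setminus B(x,r)$ for some $x \notin F$ and $r>0$, hence in a finite union of balls $X_{v}$ with $v$ of some fixed depth $d_0$, \emph{omitting at least one such ball}. Then $F$ lives in a subtree in which, at \emph{every} subsequent branching level, one branch has been deleted at depth $d_0$; the covering number $N_F(q^{-m})$ is at most (number of depth-$d_0$ vertices kept) $\times \prod_{k: n_{k} < m, n_k \geq d_0} b_k$, but crucially the first kept branching contributes a factor strictly less than the full $q^{n_k}$-budget, and I would iterate: because $F$ is compact and a proper subset, for \emph{every} large branching level $n_k$ one can show $F$ misses a definite proportion of the descendants of some fixed ancestor — more carefully, one argues that $F$ itself, being closed and proper, has a point of $X$ in its complement, and around that point an entire ball is absent, so the number of depth-$n_k$ vertices of $X$ that meet $F$ is at most $N(q^{-n_k})/b_{k_0}$ for the first branching level $k_0 \geq d_0$, and this ratio $\to 0$; combined with the super-fast growth of the $b_k$, one gets $\log N_F(q^{-m}) = o(\log q^m)$, i.e.\ $\underline\dim_M F = 0$.

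\textbf{Main obstacle.} The delicate point is the third step: ensuring that \emph{every} proper closed subset has vanishing lower Minkowski dimension, not just those of a special form. A proper closed subset need only omit a single small ball, deep in the tree, and a priori it could still be ``large'' above that depth. The resolution must use that lower Minkowski dimension is a \emph{liminf}: even if $F$ fills almost all of $X$ down to depth $d_0$, from depth $d_0$ on it is missing at least one whole ball $X_v$, and then along the subsequence of branching levels $n_k \geq d_0$ I must show $\log N_F(q^{-n_k})$ grows strictly slower than $n_k \log q$ — this is where the gaps $n_{k+1}/n_k \to \infty$ are essential, because the liminf of $\log N_F / \log(1/\varepsilon)$ is computed along the \emph{non-branching} stretches just below the next branching level, where $\log N_F$ is frozen at a value $\leq \log N(q^{-n_k}) - \log b_{k_0}$ while $\log(1/\varepsilon)$ climbs to $n_{k+1}\log q \gg n_k \log q$. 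Getting the quantifiers right — a single fixed deleted ball suffices to kill the dimension, uniformly over the liminf — and checking that the same tuning of $(n_k, b_k)$ simultaneously gives $\underline\dim_M X = 1$ exactly, is the part that needs care; everything else is routine covering-number bookkeeping in the tree.
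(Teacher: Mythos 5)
Your construction fails at precisely the step you single out as the heart of the argument. In your tree the branching is \emph{homogeneous across each level}: at depth $n_k$ every vertex has the same number $b_k$ of children. Hence deleting one ball $X_v$ (with $v$ at depth $d_0$) removes from each deeper level exactly the descendants of $v$, and these form a \emph{constant} proportion $1/D$ of that level, where $D$ is the number of balls at the level of $v$; it does not divide the covering number by $b_{k_0}$. Concretely, take $F=X\setminus X_v$, which is closed (balls are clopen in an ultrametric space) and proper: for every $m>d_0$ the number of depth-$m$ balls meeting $F$ equals $(1-1/D)\,N_X(q^{-m})$, so $N_F(\varepsilon)$ and $N_X(\varepsilon)$ differ by a fixed multiplicative constant at every scale, and $F$ has lower Minkowski dimension $1$, not $0$. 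Your key inequality ``$N_F(q^{-n_k})\leq N(q^{-n_k})/b_{k_0}$'' is false, and even if it held it would only be a constant-factor loss (since $k_0$ is fixed once $F$ is fixed), which never changes a Minkowski dimension; so no tuning of the gaps $n_{k+1}/n_k$ can rescue the argument. (A secondary slip: a subsequence of scales with $N(\varepsilon_n)\approx\varepsilon_n^{-1}$ only bounds the \emph{upper} Minkowski dimension from below; your tuning $\prod_{j\leq k-1}b_j\approx q^{n_k}$ does give $N(\varepsilon)\geq\varepsilon^{-1}$ at all scales, which is what is actually needed for the whole space, but the quantifier as written is wrong.)

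The missing idea is a construction in which a single deleted ball can, over arbitrarily long ranges of scales, account for essentially \emph{all} of the branching. The paper achieves this by making the tree maximally inhomogeneous: vertices branch one at a time, at pairwise distinct heights ($h_n=2^{-n+1}$ for the $n$-th vertex in breadth-first order), and the $n$-th vertex has about $2^{n-1}$ children, i.e. as many children as there are branches in the entire tree at that height. The whole space still satisfies $N(\varepsilon)\asymp 1/\varepsilon$, hence has Minkowski dimension $1$. But if one removes the ball $B$ of descendants of a vertex $i$, then the children of any deep descendant $k$ of $i$ (and, better, of a long run of consecutive siblings $k,\dots,k+N$ inside $B$) occupy a consecutive block of the enumeration, so all branching at the corresponding heights happens inside $B$: the covering number of $X\setminus B$ stays frozen at roughly $2^{2^{k-1}}$ while the scale drops to roughly $2^{-2^{k+N}}$, and letting $N\to\infty$ gives lower Minkowski dimension $0$ for $X\setminus B$, hence for every proper closed subset. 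This ``one ball monopolizes the branching'' feature is exactly what a level-homogeneous tree like yours cannot have, so the construction itself, not just the write-up, must be changed.
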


This set $X$ is therefore ``large'' in the sense of lower Minkowski dimension,
but its closed parts are all ``small'' in the same sense. The example we shall construct 
has the additional
properties to have a Minkowski dimension (lower and upper dimension match)
and to be ultrametric.
Before proving the Proposition, let us see how it relates to Lemma \ref{lem:sequence}.

\begin{coro}
There is a compact metric space $X$ with positive lower Minkowski dimension $d$,
but such that for no $d'>0$ and no $C$ does it exists a sequence $(q_i)$ in $X$ with
$d(q_i,q_j)\geq C i^{-1/d'}$ for all $i<j$.
\end{coro}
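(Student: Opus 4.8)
The plan is to take for $X$ the very space constructed in Proposition~\ref{prop:smallparts}, so that $\underline{\dim}_M X = 1 =: d > 0$ while every \emph{proper} closed subset of $X$ has vanishing lower Minkowski dimension. I would argue by contradiction: assume that for some $d'>0$ and $C>0$ there is a sequence $(q_i)$ in $X$ with $d(q_i,q_j)\geq C\,i^{-1/d'}$ whenever $i<j$, and extract from it a proper closed subset of $X$ of positive lower Minkowski dimension.

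The first observation is that $q_1$ is uniformly separated from the tail of the sequence: $d(q_1,q_i)\geq C\cdot 1^{-1/d'}=C$ for every $i\geq 2$, so the ball $B(q_1,C)$ contains no $q_i$ with $i\geq 2$ and hence does not meet $\overline{\{q_i:i\geq 2\}}$. Therefore $S:=\overline{\{q_i:i\geq 2\}}$ is a \emph{proper} closed subset of $X$ (it misses $q_1$). This is the trick that disposes of the otherwise annoying possibility that $(q_i)$ be dense in $X$; equivalently, one may notice that Proposition~\ref{prop:smallparts} already forces $X$ to have no isolated point — removing an isolated point would leave a proper closed subset of the same, positive, lower Minkowski dimension — so a sequence with this separation property can never be dense in the first place.

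Next I would bound $\underline{\dim}_M S$ from below by the usual packing-versus-covering estimate. For each $N$, the $N$ points $q_2,\dots,q_{N+1}$ are pairwise at distance at least $C(N+1)^{-1/d'}$, since for $2\leq i<j\leq N+1$ one has $d(q_i,q_j)\geq C\,i^{-1/d'}\geq C(N+1)^{-1/d'}$; hence any covering of $S$ by balls of radius $\tfrac{C}{2}(N+1)^{-1/d'}$ requires at least $N$ of them. Using that the covering number is non-increasing in the radius, for $\delta$ between two consecutive values $\tfrac{C}{2}(N+2)^{-1/d'}$ and $\tfrac{C}{2}(N+1)^{-1/d'}$ one gets $\log N_\delta(S)/\log(1/\delta)\geq \log N/\bigl(d'^{-1}\log(N+2)+O(1)\bigr)$, which tends to $d'$ as $\delta\to 0$; thus $\underline{\dim}_M S\geq d'>0$.

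Now $S$ is a proper closed subset of $X$ with positive lower Minkowski dimension, contradicting Proposition~\ref{prop:smallparts}; so no such sequence exists. Since $\underline{\dim}_M X=d=1>0$, this proves the corollary, and in particular shows — in view of Lemma~\ref{lem:sequence} — that the Hausdorff dimension hypothesis there cannot be weakened to a hypothesis on lower Minkowski dimension. The one point deserving a little care, and which I would single out as the crux, is ensuring that the closed set extracted from the sequence is genuinely proper; this is exactly why one works with the tail $\{q_i:i\geq 2\}$ and exploits that $q_1$ lies at distance $\geq C$ from everything else, the rest being the routine estimate already used to compute critical parameters of Banach cubes.
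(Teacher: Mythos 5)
Your proof is correct and follows essentially the same route as the paper: you derive a contradiction with Proposition~\ref{prop:smallparts} by showing the tail $(q_i)_{i\geq 2}$ lies in a proper closed subset (the paper takes $X\setminus B(q_1,C)$, you take $\overline{\{q_i:i\geq 2\}}$, an immaterial difference) whose lower Minkowski dimension is at least $d'$ by the standard separated-points covering bound. The packing estimate and the use of $d(q_1,q_i)\geq C$ to guarantee properness are exactly the paper's argument.
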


\begin{proof}
If a space $Y$ contains a sequence $(q_i)$ with
$d(q_i,q_j)\geq C i^{-1/d'}$ for all $i<j$, then it has lower Minkowski dimension
at least $d'$, as for any $\varepsilon$ one needs at least
 $N=\lfloor(C/\varepsilon)^{d'}\rfloor$
sets of diameter $\varepsilon$ to cover the the $N$ first points
in the sequence.

Now, if the space $X$ given by Proposition \ref{prop:smallparts} had such a sequence,
its proper closed subset $Y = X \setminus B(q_1,C)$ would contain the sequence
$(q_i)_{i>1}$ and therefore have lower Minkowski dimension at least $d'$, a 
contradiction.
\end{proof}

\begin{proof}[Proof of Proposition \ref{prop:smallparts}]
We construct $X$ as an ultrametric space. Its SRT $T$ is given 
in terms of two sequences $(a_n)$, $(h_n)$ to be chosen suitably afterward.
Its vertices are numbered $1,2,\dots$ by a breadth-first search,
and the vertex $n$ has $a_n$ children. In other words, $1$ is the root,
$2,\dots,1+a_1$ are the depth-$1$ vertices, $a_1+2,a_1+3,\dots,a_1+a_2+1$
are the children of $2$, and so on. We assume $a_n>1$ for all $n$,
so that the SRT has no leaf and $X$ has the topology of a Cantor set.
Then $h_n$, assumed to be decreasing, is the height of $n$.

Now, let $A_n=a_1+a_2+\dots+a_n-n+1$ be the number of branches of $T$
at height slightly below $h_n$. We set $a_1=2$, $a_{n+1}=A_n+1$ and
$h_1=1$, $h_{n+1}=1/A_n$. In that way, $A_n=2^n$ and $h_n = 2^{-n+1}$.

For any $\varepsilon$, let $n$ be such that $2^{-n+1}>\varepsilon\geqslant 2^{-n}$:
one needs $2^n$ balls of radius $\varepsilon$ to cover $X$, and
$2^n$ is between $1/\varepsilon$ and $2/\varepsilon$. Therefore, $X$ has Minkowski 
dimension $1$.

To prove that proper closed subset of $X$ have vanishing lower Minkowski dimension,
we are reduced to consider $X\setminus B$ where $B$ is some ball,
the set of descendants of vertex $i$ say. 
Let $k$ be a descendant of $i$: it has $A_{k-1}+1=2^{k-1}+1$ children,
numbered from $n=k+1+2^{k-1}$ to $m=k+1+2^k$. Then $X\setminus B$
can be covered by less than $A_n$ balls of radius $\varepsilon=2^{-m}$. Since for large
$k$, $A_n$ has the order of $\varepsilon^{-1/2}$, $X\setminus B$
has lower Minkowski dimension at most $1/2$.

But if $k$ is taken large enough, it has arbitrarily many successive
siblings $k+1,k+2,\dots, k+N$ all of which are descendants of $i$.
Then we can cover $X\setminus B$
by $A_n$ balls of radius $\varepsilon=2^{-M}$ with
\begin{align*}
M &= n+A_{k-1}+1+A_k+1+\dots+A_{k+N}+1 \\
  &\geq 2^{k-1}+2^{k-1}+2^k+2^{k+1}\dots+2^{k+n-1}\\
  &\geq 2^{k+N}-2^k = 2^{k-1}(2^{N+1}-2)
\end{align*}
For any $d>0$ and large enough $k$, we get that $A_n$ is
far less than $\varepsilon^{-d}$. Therefore, the lower Minkowski dimension
of $X\setminus B$ is zero.
\end{proof}

%%%%%%%%%%%%%%
\subsection{A small space with large Wasserstein space}
Last, we prove Proposition \ref{prop:counterex}. The example is constructed
to have infinite Minkowski dimension (the number of branches above height $\varepsilon$
in its SRT grows very fast when $\varepsilon\to 0$) but
small ``complexity'' (its SRT has countably many ends).

Let $T$ be the SRT such that
the root has two children: a leaf $w_1$ (thus $h(w_1)=0$), and $v_2$ which
has two children: a leaf $w_2$, and $v_3$ which has two children and so on;
and such that $h(o)=1$ and $h(v_n)=(1+\ln n)^{-1}$. The ultrametric space $X$ defined by $T$
is a sequence $\{w_1,w_2,\dots\}$ together with an accumulation point $w_\infty$,
and $d(w_i,w_j)=(1+\ln i)^{-1}$ when $1\leq i<j\leq \infty$.

For any $p>1$ and any $\mu,\nu\in\pr(X)$, formula \eqref{eq:coordinates}
shows that
\[\dw_p(\mu,\nu)^p\geq \sum_{n\geq 1}\frac1{(1+\ln n)^p} |\mu(\{w_n\})-\nu(\{w_n\})|.\]
Fix any $\varepsilon>0$, and
let $(b_n)$ be the sequence of sum $1$ such that $b_n=Cn^{-(1+\varepsilon)}$ for some
$C$ and all $n$. The map from $[0,1]^\mathbb{N}$ that sends
$(m_n)$ to the measure $\mu$ such that $\mu(\{w_n\})=m_n b_n$ is therefore co-Lipschitz
from $\bcube((c_n))$ to $(\pr(X),\dw_p^p)$ with
\[a_n=\frac{b_n}{(1+\ln n)^p}=\Theta\left(\frac1{n^{1+\varepsilon}(1+\ln n)^p}\right)
  =\Omega\left(\frac1{n^{1+2\varepsilon}}\right).\]
In particular, one can restrict this map to a co-Lipschitz map with domain
$\bcube((n^{-(1+2\varepsilon)}))$ whose critical parameter is $1/2\varepsilon$.
It follows that $\crit_\mathscr{P} \wass_p(X)\geq \frac{p}{2\varepsilon}$, and this holds
for all $\varepsilon>0$.

In conclusion, $X$ is a countable ultrametric space whose Wasserstein
space has infinite power-exponential critical parameter,
as claimed.

It seems plausible that (maybe at least for ultrametric spaces) the critical parameter
of $\wass_p(X)$ is bounded below by the (lower or upper?) Minkowski dimension of $X$.
The strongest conjecture would be that $\crit_\mathscr{P}\wass_p(X)=\udim X$ for all
compact metric space $X$.
However we do not know how to transform an ultrametric space into one where explicit computations are 
possible without loosing too much of its
Minkowski dimension, and general ultrametric spaces seem difficult to handle in wide generality.
Moreover, we do not know whether there is an Ultrametric Skeleton Theorem with respect to
Minkowski dimension (see Question 1.11 in \cite{Mendel-Naor:ultrametric}).

\bibliographystyle{smfalpha}
\bibliography{biblio}

%%%%%%%%%%%%%%%%%%%%%%%%%%%%%%%%%%%%%%%%%%%%%%%%%%%%%%%%%%%%%%%
%%%%%%%%%%%%%%%%%%%%%%%%%%%%%%%%%%%%%%%%%%%%%%%%%%%%%%%%%%%%%%%
\end{document}